 \newtheorem{thm}{Theorem}[section]
 \newtheorem{lem}[thm]{Lemma}
 \newtheorem{prop}[thm]{Proposition}
 \theoremstyle{definition}
 \newtheorem{defn}[thm]{Definition}
 \theoremstyle{remark}
 \newtheorem{rem}[thm]{Remark}
 \newtheorem*{ex}{Example}
 \numberwithin{equation}{section}
\begin{document}

\title[Lagrangian submanifolds] {Lagrangian submanifolds  in complex space forms  satisfying an improved equality involving $\delta(2,2)$}

\author[B.-Y. Chen]{Bang-Yen Chen }
\address{Department of Mathematics \\
	Michigan State University \\East Lansing, Michigan 48824--1027, U.S.A.}
\email{bychen@math.msu.edu}

%----------Author 2
\author[A. Prieto-Mart\'{i}n]{Alica Prieto-Mar\'{i}n }
\address{Department of Geometry and Topology \\University of Seville \\Apdo. de Correos 1160, 41080 - Sevilla, Spain} \email{aliciaprieto@us.es}

\author[X. Wang]{Xianfeng Wang}
\address{School of Mathematical Sciences and LPMC \\
Nankai University \\
Tianjin 300071,  P. R. China}
\email{wangxianfeng@nankai.edu.cn}

%----------classification, keywords, date
\subjclass{Primary:  53C40;  Secondary  53D12}

\keywords{Lagrangian submanifold,  improved inequality,  $\delta$-invariants, ideal submanifolds, $H$-umbilical Lagrangian submanifold.}

\thanks{A portion of this work was done while the second author was visiting Michigan State University in 2011, supported by a Fundaci\'{o}n C\'{a}mara grant, University of Sevilla, Spain. The third author was supported by the NSFC No.11171175 and the ``Fundamental Research Funds for the Central Universities''}

\date{}

\begin{abstract}  It was proved in \cite{CD,cdvv}  that every Lagrangian submanifold
$M$ of a complex space form $\tilde M^{5}(4c)$ of constant holomorphic sectional
curvature $4c$ satisfies the following optimal inequality:
\begin{align}\tag{A}\delta(2,2)\leq \text{\small$\frac{25}{4}$} H^{2}+8c,\end{align} where $H^{2}$ is the squared mean curvature and $\delta(2,2)$ is a $\delta$-invariant on $M$ introduced by the first author. This optimal inequality improves a special case of an earlier inequality obtained in [B.-Y. Chen, Japan. J. Math. {\bf 26} (2000), 105--127].

The main purpose of this paper is to classify Lagrangian submanifolds of $\tilde M^{5}(4c)$ satisfying the equality case of the  improved inequality (A).\end{abstract}

%%% ----------------------------------------------------------------------
\maketitle

\def\<{\left < }
\def\>{\right >}
\def\({\left ( }
\def\){\right )}
\def\o{\omega }
\def\csch{\,{\rm csch}\,}
\def\sech{\,{\rm sech}\,}
\def\a{\alpha}\def\b{\beta }
\def\va{\varphi}
\def\ii{\hskip.006in {\rm i}\hskip.006in}
\def\g{\gamma}
\def\e{\eqref}
\def\tn{\tilde\nabla}

\section{Introduction} 

Let $\tilde M^n$ be a K\"ahler $n$-manifold with the complex structure $J$, a K\"ahler metric $g$ and the K\"ahler 2-form $\omega$.  An isometric immersion  $\psi:M\to \tilde M^n$ of a Riemannian $n$-manifold $M$ into  $\tilde M^n$ is called {\it Lagrangian\/} if $\psi^*\omega=0$. 

Let $\tilde M^n(4c)$ denote a K\"ahler $n$-manifold  with constant holomorphic sectional curvature $4c$, called a {\it complex space form}. A complete simply-connected complex space form $\tilde M^{n}(4c)$ is holomorphically isometric to the complex Euclidean $n$-plane ${\bf C}^n$, the complex projective $n$-space $CP^n(4c)$, or a complex hyperbolic $n$-space $CH^n(4c)$ according to $c=0,\, c>0$ or $c<0$, respectively.

 B.-Y. Chen introduced in 1990s new Riemannian invariants $\delta(n_1,\ldots,n_k)$. 
For any $n$-dimensional submanifold $M$ in a real space form $R^{m}(c)$ of constant curvature $c$, he 
 proved the following sharp general inequality (see \cite{c00a,book} for details):
\begin{equation}\begin{aligned}\label{1.1} \delta{(n_1,\ldots,n_k)} \leq &\, {{n^2(n+k-1-\sum n_j)}\over{2(n+k-\sum n_j)}} H^2 \\&+{1\over2} \Big({{n(n-1)}}-\sum_{j=1}^k {{n_j(n_j-1)}}\Big)c.\end{aligned}\end{equation}
 
For Lagrangian submanifolds in a complex space form $\tilde M^n(4c)$, we have
\vskip.05in

 {\bf Theorem A.} {\it  Let $M$ be an $n$-dimensional Lagrangian submanifold in a complex  space form $\tilde M^n(4c)$ of constant holomorphic sectional curvature $4c$. Then  inequality \e{1.1} holds  for each $k$-tuple  $(n_1,\ldots,n_k)\in\mathcal S(n)$.}
\vskip.05in

The following result from \cite{c00b} extends a result in \cite{cdvv2} on $\delta(2)$.
\vskip.05in

 {\bf Theorem B.} {\it  Every Lagrangian submanifold of a complex space form $\tilde M^n(4c)$ is minimal if it satisfies the equality case of \eqref{1.1} identically.}
\vskip.05in

Theorem B was improved recently in \cite{CD,cdvv} to the following inequality.

\vskip.05in

 {\bf Theorem C.} {\it  Let $M$ be an $n$-dimensional Lagrangian submanifold of $\tilde M^n(4c)$. Then, for an  $(n_1,\ldots,n_k)\in {\mathcal S}(n)$ with $\sum_{i=1}^{k} n_{i}<n$, we have
\begin{equation}\begin{aligned} \label{1.2}  \delta(n_1,\ldots,n_k) \leq &\, \text{\small$\dfrac{n^2 \Big\{\Big(n- {\sum_{i=1}^k}  n_i+ 3k-1\Big)-6\, {\sum_{i=1}^k} (2+ n_{i})^{-1}\Big\}}{2\Big\{\Big(n-{\sum_{i=1}^k}n_i+3k+ 2\Big)-6\,{\sum_{i=1}^k} (2+ n_{i})^{-1}\Big\}}$} H^2\
\\&\hskip.9in  +\text{$\frac{1}{2}$}\Big\{n(n-1)-\text{\small$\sum$}_{i=1}^k n_i(n_i-1)\Big\}c.\end{aligned}\end{equation}

The equality sign holds at a point $p\in M$ if and only if there is an orthonormal basis $\{e_1,\ldots,e_n\}$ at $p$ such that  the second fundamental form $h$ satisfies}
\begin{equation}\begin{aligned} \label{1.3}&h(e_{\alpha_i},e_{\beta_i})=\text{\small$\sum$}_{\gamma_i} h^{\gamma_i}_{\alpha_i \beta_i} Je_{\gamma_i}+\frac{3\delta_{\alpha_i\beta_i} }{2+n_i}\lambda Je_{N+1},\;\;  \text{\small$\sum$}_{\alpha_i=1}^{n_i} h^{\gamma_i}_{\alpha_i\alpha_i}=0,
\\&  h(e_{\alpha_i},e_{\alpha_j})=0,\; i\ne j; 
\; h(e_{\alpha_i},e_{N+1})=\frac{3\lambda}{2+n_i} J e_{\alpha_i},\;   h(e_{\alpha_i},e_u)=0,
\\& h(e_{N+1},e_{N+1})=3\lambda Je_{N+1},
\; h(e_{N+1},e_u)=\lambda Je_u , \; N=n_1+\cdots+n_k,
\\&h(e_u,e_v)=\lambda \delta_{uv} Je_{N+1},
\;  i,j=1,\ldots,k; \, u,v=N+2,\ldots,n.\end{aligned}\end{equation}
\vskip.05in

For simplicity, we call a Lagrangian submanifold of a complex space form  {\it $\delta(n_{1},\ldots,n_{k})$-ideal} (resp., {\it  improved $\delta(n_{1},\ldots,n_{k})$-ideal}\/) if it satisfies the equality case of \e{1.1} (resp., the equality case of \e{1.2}) identically.

For $k=2$ and $n_{1}=n_{2}=2$,  Theorem C reduces to the following.
\vskip.04in 

{\bf Theorem D.} {\it Let $M$ be a Lagrangian submanifold in a complex  space form $\tilde M^5(4c)$ of constant holomorphic sectional curvature $4c$. Then we have
\begin{equation} \label{1.4} \delta(2,2)\leq \text{\small$\frac{25}{4}$} H^{2}+8c.\end{equation}

If the equality sign of \e{1.4} holds identically, then with respect some suitable orthonormal frame $\{e_{1},\ldots,e_{5}\}$ the second fundamental form $h$ satisfies
\begin{equation}\begin{aligned}
\label{1.5} &h(e_{1},e_{1})=\alpha Je_{1}+\beta Je_{2}+\mu Je_{5},\; h(e_{1},e_{2})=\beta Je_{1}-\alpha Je_{2},\;\;
\\& h(e_{2},e_{2})=-\alpha Je_{1}-\beta Je_{2}+\mu Je_{5},\;\;
\\ &h(e_{3},e_{3})=\gamma Je_{3}+\delta Je_{4}+
\mu Je_{5}, \; h(e_{3},e_{4})=\delta Je_{3}-\gamma Je_{4},\;\;\\& h(e_{4},e_{4})=-\gamma Je_{3}-\delta Je_{4}+\mu Je_{5},\; h(e_{5},e_{5})=4\mu Je_{5},
\\& h(e_{i},e_{5})=\mu Je_{i},\; i\in \Delta; \; 
  h(e_{i},e_{j})=0,\;\; otherwise,
\end{aligned}\end{equation}
for some functions $\alpha,\beta,\gamma, \delta,\mu$, where $\Delta=\{1,2,3,4\}$.}
\vskip.05in

The classification of $\delta(2,2)$-ideal Lagrangian submanifolds in complex space forms $\tilde M^{5}(4c)$ is done in \cite{CP}.
In this paper we classify  improved $\delta(2,2)$-ideal Lagrangian submanifolds in $\tilde M^{5}(4c)$. The main results of this paper are stated as Theorem \ref{T:6.1}, Theorem   \ref{T:7.1} and  Theorem \ref{T:8.1}.

\section{Preliminaries}
\subsection{Basic formulas}
Let $\tilde M^n(4c)$ denote a complete simply-connected K\"ahler $n$-manifold  with constant holomorphic sectional curvature $4c$. Then $\tilde M^n(4c)$ is holomorphically isometric to the complex Euclidean $n$-plane ${\bf C}^n$, the complex projective $n$-space $CP^n(4c)$, or a complex hyperbolic $n$-space $CH^n(-4c)$ according to $c=0,c>0$ or $c<0$.

Let $M$ be a Lagrangian submanifold of  $\tilde M^n(4c)$.
We denote the Levi-Civita connections of $M$ and  $\tilde M^n(4c)$ by $\nabla$ and $\tilde \nabla$, respectively.
The formulas of Gauss and Weingarten are given respectively by (cf. \cite{book})
\begin{align}\label{2.1} &\tn_X Y = \nabla_X Y + h(X,Y),\;\; \tn_X \xi = -A_\xi X + D_X \xi,\end{align}
for tangent vector fields $X$ and $Y$ and normal vector fields  $\xi$, where $h$ is the second fundamental form, $A$ is the shape operator and $D$ is the normal connection. 

The second fundamental form  and the shape operator are related by
$$\<h(X,Y),\xi\> = \<A_\xi X,Y\>. $$
The mean curvature vector $\overrightarrow{H}$ of $M$ is defined by $\overrightarrow{H}={1\over n}\,\hbox{trace}\,h$ and the {\it squared mean curvature} is given by $H^{2}=\<\right.\hskip-.02in \overrightarrow{H},\overrightarrow{H}\hskip-.02in \left.\>.$

 For Lagrangian submanifolds, we have (cf. \cite{book,CO})
\begin{align}\label{2.3} &D_X JY = J \nabla_X Y,
\\\label{2.4} &A_{JX} Y = -J h(X,Y)=A_{JY}X.\end{align}
Formula \e{2.4} implies that $\<h(X,Y),JZ\>$ is totally symmetric.  

The equations of Gauss and Codazzi are given respectively by
\begin{align} \label{Gauss} & \<R(X,Y)Z,W\> =  \<A_{h(Y,Z)} X,W\> - \<A_{h(X,Z)}Y,W\>\\&
\notag \hskip1.3in +c(\<X,W\>\<Y,Z\>-\<X,Z\>\<Y,W\>),
\\ &\label{Codazzi} (\nabla_{X} h)(Y,Z) = (\nabla_{Y} h)(X,Z),\end{align}
where $R$ is the curvature tensor of $M$ and $\nabla h$ is defined
by \begin{align}\label{2.7}(\nabla_{X} h)(Y,Z) = D_X h(Y,Z) - h(\nabla_X Y,Z) - h(Y,\nabla_X Z).\end{align}

For an orthonormal basis $\{e_1,\ldots,e_n\}$ of $T_pM$, we put $$h^i_{jk}=\<h(e_j,e_k),Je_i\>,\;\;i,j,k=1,\ldots,n.$$
It follows from \eqref{2.4} that $ h^i_{jk}=h^j_{ik}=h^k_{ij}.$

\subsection{$\delta$-invariants}

Let $M$ be a  Riemannian $n$-manifold. Denote by $K(\pi)$ the sectional curvature of  a plane section $\pi\subset T_pM$, $p\in M$. For any orthonormal basis
$e_1,\ldots,e_n$ of $T_pM$, the scalar curvature $\tau$ at $p$ is
 $\tau(p)=\sum_{i<j} K(e_i\wedge e_j).$

Let $L$ be a $r$-subspace of $T_pM$ with $r\geq 2$  and $\{e_1,\ldots,e_r\}$ an orthonormal basis of $L$. The scalar curvature $\tau(L)$ of  $L$ is defined by
\begin{align}\label{2.8}\tau(L)=\sum_{\alpha<\beta} K(e_\alpha\wedge e_\beta),\quad 1\leq \alpha,\beta\leq r.\end{align}

For given integers $n\geq 3$,  $k\geq 1$, we denote by $\mathcal S(n,k)$ the finite set  consisting of $k$-tuples $(n_1,\ldots,n_k)$ of integers  satisfying  
$2\leq n_1,\cdots,
n_k<n$  and $\sum_{{j=1}}^{k} i\leq n.$ 

Put ${\mathcal S}(n)=\cup_{k\geq 1}\mathcal S(n,k)$.
For each $k$-tuple $(n_1,\ldots,n_k)\in \mathcal S(n)$, the first author introduced in 1990s the Riemannian invariant $\delta{(n_1,\ldots,n_k)}$ by
\begin{align}\label{2.9} \delta(n_1,\ldots,n_k)(p)=\tau(p)- \inf\{\tau(L_1)+\cdots+\tau(L_k)\},\;\; p\in M,\end{align} where $L_1,\ldots,L_k$ run over all $k$ mutually orthogonal subspaces of $T_pM$ such that $\dim L_j=n_j,\, j=1,\ldots,k$ (cf. \cite{book} for details).

\subsection{Horizontal lift of Lagrangian submanifolds}

The following link between Legendrian submanifolds and Lagrangian submanifolds is due to  \cite{R} (see also \cite[pp. 247--248]{book}).
\vskip.04in

\noindent {\it Case} (i): $CP^n(4)$.  Consider Hopf's fibration $\pi :S^{2n+1}\to CP^n(4).$  For a given point $u\in S^{2n+1}(1)$, the horizontal space at $u$ is the orthogonal complement of $\i u, \, \i=\sqrt{-1},$ with respect to the metric  on $S^{2n+1}$ induced from the metric on ${\bf C}^{n+1}$.   
Let $\iota : N \to CP^n(4)$ be a Lagrangian isometric immersion. Then there is a covering map
$\tau: \hat N \to N$ and a horizontal  immersion $\hat \iota :\hat N \to S^{2n+1}$ such that
$\iota\circ \tau=\pi \circ \hat \iota$.  Thus each Lagrangian immersion can be lifted locally (or globally if $N$ is simply-connected) to a Legendrian immersion of the same Riemannian manifold. In particular, a minimal Lagrangian submanifold of $CP^{n}(4)$ is lifted to a minimal Legendrian submanifold of the Sasakian $S^{2n+1}(1)$.

Conversely, suppose that $f: \hat N \to S^{2n+1}$ is a Legendrian isometric immersion. Then $\iota =\pi\circ f:  N\to  CP^n(4)$ is again a Lagrangian isometric immersion.  Under this correspondence the second fundamental forms $h^f$ and $h^\iota$ of $f$ and $\iota$ satisfy $\pi_*h^f=h^\iota$.  Moreover, $h^f$ is horizontal with respect to $\pi$.  
\vskip.04in

\noindent {\it Case} (ii): $CH^n(-4)$.  We consider  the complex number space  ${\bf C}^{n+1}_1$ equipped
with the pseudo-Euclidean metric:
$g_0=-dz_1d\bar z_1 +\sum_{j=2}^{n+1}dz_jd\bar z_j .$

 Consider 
$H^{2n+1}_{1}(-1)=\{z\in {\bf C}^{2n+1}_1: \<z,z\>=-1\}$ 
with the canonical Sasakian structure, where $\<\;\,,\;\>$ is the induced inner product.

Put $T'_z=\{ u\in {\bf C}^{n+1}:\<u,z\>=0\},\;\; H_1^1=\{\lambda\in {\bf  C}: \lambda\bar\lambda=1\}.$
 Then there is an $H^1_1$-action on $H_1^{2n+1}(-1)$,  $z\mapsto \lambda z$ and at each point $z\in H^{2n+1}_1(-1)$, the vector $\xi=-\i z$ is tangent to the flow of the action. Since the metric $g_0$ is Hermitian, we have $\<\xi,\xi\>=-1$.  The quotient space $H^{2n+1}_1(-1)/\sim$, under the identification induced from the action, is the complex
hyperbolic space $CH^n(-4)$ with constant holomorphic sectional curvature $-4$ whose complex structure $J$ is  induced from the complex structure $J$ on ${\bf C}^{n+1}_1$ via Hopf's fibration $\pi :H^{2n+1}_1(-1 )\to  CH^n(4c).$

 Just like case (i), suppose that $\iota: N \to CH^n(-4)$ is a Lagrangian immersion, then there is an isometric covering map
$\tau: \hat N \to N$ and a Legendrian immersion $f: \hat N \to H_1^{2n+1}(-1)$ such that $\iota\circ \tau=\pi\circ f$.  Thus every Lagrangian immersion into $CH^n(-4)$ an be lifted locally (or globally if $N$ is simply-connected) to a Legendrian immersion into $H^{2n+1}_1(-1)$. In particular,  Lagrangian minimal submanifolds of $CH^{n}(-4)$ are lifted to Legendrian minimal submanifolds of $H^{2n+1}_{1}(-1)$.
Conversely, if $f:\hat N \to H_1^{2n+1}(-1)$ is a Legendrian immersion, then $\iota =\pi\circ f:  N\to CH^n(-4)$ is a Lagrangian immersion.  Under this correspondence the second fundamental forms $h^f$ and $h^\iota$ are related by $\pi_*h^f=h^\iota$. Also, $h^f$ is horizontal with respect to $\pi$. 

Let $h$ be the second
fundamental form of $M$ in $S^{2n+1}(1)$ (or in $H^{2n+1}_1(-1)$). Since $S^{2n+1}(1)$ and $H^{2n+1}_1(-1)$ are totally umbilical  with one as its mean curvature in  ${\bf C}^{n+1}$ and in ${\bf C}^{n+1}_1$,
respectively, we have  
\begin{align}\label{2.10} \hat \nabla_XY=\nabla_XY+h(X,Y)-\varepsilon L,\end{align} 
where $\varepsilon=1$ if the ambient space is ${\bf C}^{n+1}$; and $\varepsilon=-1$ if it is ${\bf C}^{n+1}_1$.

\section{$H$-umbilical Lagrangian submanifolds and complex extensors}

 \subsection{$H$-umbilical Lagrangian submanifolds}
 \begin{defn}  A non-totally geodesic Lagrangian submanifold of a K\"ahler $n$-manifold is called {\it $H$-umbilical}  if its second fundamental form satisfies 
\begin{equation} \begin{aligned} \label{3.1}&
 h(e_j, e_j)\hskip-.01in =\hskip-.01in \mu
J e_n, \;\; h(e_j, e_n)\hskip-.01in =\hskip-.01in \mu J e_j,\; \; j=1,\ldots, n-1,
\\& h( e_n, e_n)= \varphi J e_n,\; h( e_j,e_k)= 0,\;\;  1\leq j\ne k\leq n-1, \end{aligned}\end{equation}  for some  functions $\mu,\varphi$ with respect to an orthonormal frame $\{ e_1,\ldots, e_n\}$.
If the ratio of $\varphi:\mu$ is a constant $r$,   the $H$-umbilical submanifold is said to be {\rm of ratio} $r$.  \end{defn}

If $G:N^{n-1}\rightarrow \mathbb E^n$ is a hypersurface of a Euclidean $n$-space $\mathbb E^n$ and $\gamma:I\rightarrow {\bf C}^*$ is a unit speed curve in  ${\bf C}^*={\bf C}-\{0\}$, then we may extend $G:N^{n-1}\rightarrow \mathbb E^n$ to an immersion  $I\times N^{n-1}\to {\bf C}^n$ by
$\gamma \otimes G:I\times N^{n-1}\rightarrow {\bf C}\otimes \mathbb E^n={\bf C}^n$,
where $(\gamma \otimes G)(s,p)=F(s)\otimes G(p)$ for $ s\in I, \; p\in N^{n-1}.$ This extension  of $G$ via tensor product $\otimes$ is called the {\it complex extensor\/} of $G$ via the {\it generating curve} $\gamma$.

$H$-umbilical Lagrangian submanifolds in complex space forms were classified in a series of papers by the first author (cf. \cite{tohoku,israel,tohoku2}). In particular, 
the following two results were proved in \cite{tohoku}.
\vskip.05in

 {\bf Theorem E.}\label{T:A} {\it Let $\iota:S^{n-1}\subset \mathbb E^n$ be  the unit hypersphere in $\mathbb E^n$ centered at the origin. Then every complex extensor of $\iota$ via a unit speed curve $\gamma:I\to \bf C^*$ is an $H$-umbilical  Lagrangian submanifold of ${\bf C}^n$ unless $\gamma$ is contained in a line through the origin (which gives a totally geodesic Lagrangian submanifold).} 
\vskip.05in

 {\bf Theorem F.} \label{T:B} {\it Let  $M$  be an  $H$-umbilical Lagrangian submanifold of $\,{\bf C}^n$ with $n\geq 3$. Then  $M$ is either a flat space or congruent to an open part of a complex extensor of $\iota:S^{n-1}\subset \mathbb E^n$ via a curve $\gamma:I\to \bf C^*$.}
\vskip.05in

\subsection{Legendre curves} 

A unit speed curve $z:I\to S^{3}(1)\subset {\bf C}^{2}$ (resp., $z:I\to H_{1}^{3}(-1)\subset {\bf C}_{1}^{2}$) is called  {\it Legendre} if $\<z',\ii z\>=0$. It was proved in \cite{israel} that a unit speed curve $z$ in $S^{3}(1)$ (resp., in $H^{3}_{1}(-1)$) is Legendre if and only if it satisfies
\begin{align}\label{3.2} z''=\ii \lambda z'- z \;\; ({\rm resp}., \; z''=\ii \lambda z'+ z)\end{align}
for a real-valued function $\lambda$. It is known in \cite{israel} that $\lambda$ is  the curvature function of  $z$ in $S^{3}(1)$ (resp., in $H^{3}_{1}(-1)$) (see also  \cite[Lemmas 3.1 and 3.2]{CC}).

\subsection{$H$-umbilical submanifolds with arbitrary ratio} 

We provide a general method to construct $H$-umbilical  Lagrangian submanifolds with any given ratio in $CP^{n}(4)$ via  curves in $S^{2}(\tfrac{1}{2})$ (resp., in  $CH^{n}(-4)$ via curves in  $H^{2}(-\tfrac{1}{2})$).

\begin{prop}  For any  real number $r$ there exist $H$-umbilical Lagrangian submanifolds of ratio $r$ in $CP^{n}(4)$ and in $CH^{n}(-4)$.
\end{prop}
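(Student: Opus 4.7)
The plan is to exhibit $H$-umbilical Lagrangian submanifolds of any prescribed ratio by the horizontal-lift construction outlined in Section~2.3. For $CP^n(4)$, I would start with a unit-speed Legendre curve $z=(z_1,z_2):I\to S^3(1)\subset \mathbf{C}^2$, which by \eqref{3.2} satisfies $z''=\ii\lambda z'-z$ for some curvature function $\lambda$, and build the map
\begin{equation*}
F:I\times S^{n-1}(1)\longrightarrow S^{2n+1}(1)\subset \mathbf{C}^{n+1},\qquad F(s,y)=(z_1(s)\,y,\,z_2(s)),
\end{equation*}
where $y\in S^{n-1}\subset \mathbf{E}^n\subset \mathbf{C}^n$. (The proposition's mention of curves in $S^2(\tfrac12)$ is consistent with this, since $S^2(\tfrac12)=CP^1(4)$ is the base of the Hopf fibration $S^3(1)\to S^2(\tfrac12)$.) The rotational symmetry of this ansatz is what will force the $H$-umbilical pattern on the quotient.

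Next I would verify that $F$ is a Legendre immersion into $S^{2n+1}(1)$ with its canonical Sasakian structure. This reduces to $\langle z',\ii z\rangle=0$ (the Legendre condition for $z$) together with the orthogonality of the $S^{n-1}$-tangent vectors to $\ii F$, which is automatic because $y\perp\ii y$ in $\mathbf{E}^n\subset \mathbf{C}^n$. By Case~(i) of Section~2.3, the composition $\iota=\pi\circ F$ is then a Lagrangian immersion into $CP^n(4)$, and its second fundamental form is obtained by projecting that of $F$ through $\pi_*$. Using \eqref{2.10} together with the Legendre ODE to compute $\tilde\nabla_X Y$ for the adapted frame $e_1,\ldots,e_{n-1}$ tangent to the $S^{n-1}$-factor and $e_n$ in the $\partial_s$-direction, I expect the rotational symmetry and the identity $|z|^2=1$ to force all mixed components $h(e_i,e_j)$ with $i\ne j<n$ to vanish and to yield exactly the pattern \eqref{3.1}, with scalars
\begin{equation*}
\mu=\mu(z_1,z_2,\lambda),\qquad \varphi=\varphi(z_1,z_2,\lambda)
\end{equation*}
given by explicit expressions in the components of $z$ and $\lambda$.

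Having expressed the ratio $r=\varphi/\mu$ as an explicit function of $\lambda$ and the initial data of $z$, I would then show that for any prescribed real $r$ one can choose $\lambda$ (a constant will suffice) so that this ratio equals $r$ identically along the curve. Existence of the corresponding Legendre curve is guaranteed by the ODE \eqref{3.2} and standard ODE theory, which allows one to prescribe initial conditions $(z(0),z'(0))$ on $S^3(1)$ satisfying the Legendre constraint. The construction for $CH^n(-4)$ is strictly parallel: one replaces $S^3(1)$ by $H^3_1(-1)\subset \mathbf{C}^2_1$, uses the sign-flipped ODE $z''=\ii\lambda z'+z$, and works inside the pseudo-Hermitian space $\mathbf{C}^{n+1}_1$, projecting by the Hopf fibration of Case~(ii).

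The main obstacle I foresee is the computational bookkeeping in the second step: one must simultaneously verify that (i) the mixed components $h(e_i,e_j)$ for $i\ne j$ with $i,j<n$ vanish, (ii) the diagonal scalars in $h(e_j,e_j)=\mu J e_n$ and the off-diagonal scalars in $h(e_j,e_n)=\mu J e_j$ agree (so that a single $\mu$ works), and (iii) the resulting function $\lambda\mapsto \varphi/\mu$ surjects onto $\mathbf{R}$ so that every real $r$ is attained. The first two amount to exploiting the $O(n)$-symmetry of $S^{n-1}$ and the unit-sphere relations to secure cancellations; the third is a short analytic step using the explicit form of $\mu$ and $\varphi$.
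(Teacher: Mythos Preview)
Your construction is the same one the paper uses (up to swapping $z_1$ and $z_2$), and the first two steps are correct: the map $F$ is a Legendre immersion in $S^{2n+1}(1)$, the projection $\pi\circ F$ is $H$-umbilical, and in the resulting pattern \eqref{3.1} one has $\varphi=\lambda$, the curvature of the Legendre curve. The gap is in your third step, and it is exactly your obstacle (iii), which is more serious than you anticipate.

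Once the Legendre curve $z$ is chosen, the function $\mu$ is \emph{not} a free parameter you can tune via initial data independently of $\lambda$: it is forced by the geometry of $z$ (essentially by $|z_2|$), and the Codazzi equation couples $\mu$ and $\varphi$. If you take $\lambda$ constant, then $\varphi$ is constant but $\mu$ generally is not, so the ratio $\varphi/\mu$ varies along the curve and you do \emph{not} obtain an $H$-umbilical submanifold of fixed ratio. Indeed, a constant-$\mu$ solution forces $(r-1)\mu^2+c=0$, which in $CP^n(4)$ ($c=1$) is impossible for $r\ge 1$ and in $CH^n(-4)$ ($c=-1$) is impossible for $r\le 1$; so ``a constant will suffice'' fails for half of all ratios.

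The paper inverts the logic. It first invokes \cite[Lemma~7.2]{israel}, which says that on any $H$-umbilical Lagrangian submanifold of ratio $r\ne 2$ in $\tilde M^n(4c)$ the function $\mu$ must satisfy the second-order ODE \eqref{3.4}. One then picks a nontrivial solution $\mu(t)$ of \eqref{3.4} and takes the Legendre curve $z$ to have (non-constant) curvature $\lambda=r\mu$; by \cite[Theorem~4.1]{israel} the resulting submanifold is $H$-umbilical with exactly this $\mu$ and with $\varphi=\lambda=r\mu$, hence of ratio $r$. The case $r=2$, where \eqref{3.4} degenerates, is handled separately by \cite[Theorems~5.1 and~6.1]{israel}. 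The missing ingredient in your plan is precisely the compatibility ODE \eqref{3.4}; without it you cannot arrange $\varphi/\mu$ to be identically equal to a prescribed $r$.
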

\begin{proof} If $r=2$ this was done in \cite[Theorems 5.1 and 6.1]{israel}. 
If $r\ne 2$, $H$-umbilical  Lagrangian submanifolds of ratio $r$ can be constructed as follows:

\vskip.05in
 {\it Case} (a): $CP^{n}(4)$.
Let $S^2(\tfrac{1}{2})=\{ {\bf x}\in {\mathbb E}^3 ; \<{\bf x},{\bf x}\>=\tfrac{1}{4}\}$.
 The Hopf fibration $\pi$ from $S^3(1)$ onto $S^2(\tfrac{1}{2})\equiv CP^1(4)$ is given by (cf. \cite{CC})
\begin{align}\pi(z_{1},z_{2})=  \(z_{1}\bar z_{2},\text{$\tfrac{1}{2}$}( |z_{1}|^2-|z_{2}|^2) \),\;\; (z_{1},z_{2})\in S^3(1)\subset {\bf C}^2.  
\end{align}
For a Legendre curve $z$ in $S^3(1)$, the projection $\gamma_{z}=\pi\circ z$ is a curve in $S^2(\tfrac{1}{2})$. Conversely,   each curve $\gamma$ in $S^2(\tfrac{1}{2})$ gives rise to a horizontal lift $\tilde{\gamma}$ in $S^3(1)$ via $\pi$ which is unique up to a factor $e^{i\theta},\theta\in {\bf R}$. Notice that each horizontal lift of $\gamma$  is a Legendre curve in $S^3(1)$. Moreover,
since the Hopf fibration is a Riemannian submersion, each unit speed Legendre curve $z$ in $S^3(1)$  is projected to  a unit speed curve $\gamma_{z}$ in $S^2(\tfrac{1}{2})$ with the same curvature.  

It was known in \cite[Lemma 7.2]{israel} that,  for a given $H$-umbilical Lagrangian submanifold of ratio $r\ne 2$ in ${\tilde M}^n(4c)$, the function $\mu$ in \e{3.1} satisfies 
\begin{align} \label{3.4} \mu \mu''-\(\text{\small$\frac{r-3}{r-2}$}\)\mu'{}^2+(r-2)\mu^2((r-1)\mu^2+c)=0.
\end{align}

If $\mu$ is a non-trivial solution of \e{3.4} with $c=1$, then there is a unit speed curve $\gamma$ in $S^{2}(\tfrac{1}{2})$ whose curvature equals to  $r\mu$. Let $z$ be a horizontal lift of  $\gamma$ in $ S^3(1)$. Then $z$ is a unit speed Legendre curve satisfying $z''(x)= \ii r\mu z'(x)-z(x)$ (cf. \cite[Theorem 4.1]{israel} or \cite[Lemma 3.1]{CC}).

Consider the map $\psi:M^{5}\to S^{11}(1)\subset {\bf C}^{6}$ defined by
 \begin{align}\label{3.5} \psi(x,y_{1},\ldots,y_{5})=(z_{1}(x),z_{2}(x)y_{1},\ldots,\ldots,z_{2}(x)y_{5}),\;\; \sum_{j=1}^{5}y_{j}^{2}=1.\end{align} 
It follows from \cite[Theorem 4.1 and Lemma 7.2]{israel} that  $\pi\circ \psi$ is a $H$-umbilical Lagrangian submanifold of ratio $r$ in $CP^{n}(4)$ such that
\begin{equation} \begin{aligned} \label{8.5}&
 h(e_j,e_j)\hskip-.01in =\hskip-.01in \mu
J e_5, \;\; h(e_j, e_n)= \mu J e_j,\; \;
\\& h(e_n, e_n)= r\mu J e_n,\; h(e_j,e_k)= 0,\;\;  1\leq j\ne k\leq n-1, \end{aligned}\end{equation}  
with respect to suitable orthonormal frame $\{e_{1},\ldots,e_{5}\}$.

\vskip.05in
{\it Case} (b): $CH^{n}(-4)$. For a non-trivial solution of \e{3.4} with $c=-1$, we can construct an $H$-umbilical Lagrangian submanifold of $CH^{n}(-4)$ via the Hopf fibration $\pi: H^{3}_{1}(-1)\to CH^{1}(-4)\equiv H^{2}(-\tfrac{1}{2})$ in a similar way as case (a), where 
 \begin{align}\label{3.76}
\pi(z_{1},z_{2})=  \(z_{1}\bar z_{2},\text{$\tfrac{1}{2}$}( |z_{1}|^2+|z_{2}|^2) \),\;\; (z_{1},z_{2})\in H_{1}^3(-1)\subset {\bf C}_{1}^2,   \end{align}
and $H^{2}(-\tfrac{1}{2})=\{ (x_{1},x_{2},x_{3})\in {\mathbb E}^3_{1} \,:\,
x_1^2-x_2^2-x_3^2=\tfrac{1}{4}, \, x_1 \geq \tfrac{1}{2}  \}$ 
 is the model of the real projective plane of curvature $-4$.
\end{proof}

\subsection{Classification of $H$-umbilical submanifolds of ratio 4}

The equation of Gauss  and \e{3.1} imply that $H$-umbilical Lagrangian submanifolds of ratio $r\ne 4$ in complex space forms  contain no open subsets of constant sectional curvature. Hence we conclude from \cite[Theorems 4.1 and 7.1]{israel} and \S3.3 the following results. 

\begin{lem} \label{L:3.4} An  $\,H$-umbilical  Lagrangian submanifold $M$ of ratio 4 in $CP^{5}(4)$ is congruent to an open portion of $\pi\circ \psi$, where $\pi:S^{11}(1)\to CP^{5}(4)$ is Hopf's fibration,  $\psi:M\to S^{11}(1)\subset {\bf C}^{6}$ is given by
\begin{align}\label{3.8} \psi(t,y_{1},\ldots,y_{5})=(z_{1}(t),z_{2}(t){\bf y}),\;\; \{{\bf y}\in {\mathbb E}^{5}:\<{\bf y},{\bf y}\>=1\},\end{align}
 and $z=(z_{1},z_{2}):I\to S^{3}(1)\subset {\bf C}^{2}$ is a unit speed Legendre curve satisfying $z''= 4\ii \mu z'-z,$ and $\mu$ is a nonzero solution of 
$2\mu \mu''-\mu'{}^2+4\mu^2(3\mu^2+1)=0$.
\end{lem}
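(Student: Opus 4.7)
The plan is to combine the Codazzi analysis for the $H$-umbilical ansatz with the explicit model constructed in Section 3.3, and then close with the rigidity statement in \cite{israel}.

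First, I would specialize the general ODE \eqref{3.4} to the case at hand. Taking $r=4$ and $c=1$ we have $(r-3)/(r-2)=1/2$, so \eqref{3.4} becomes $\mu\mu'' - \tfrac{1}{2}(\mu')^2 + 2\mu^2(3\mu^2+1) = 0$, equivalently $2\mu\mu'' - (\mu')^2 + 4\mu^2(3\mu^2+1) = 0$, which is precisely the ODE recorded in the conclusion of the lemma. That this ODE is forced by the geometric hypothesis is the content of \cite[Lemma~7.2]{israel}; it can also be re-derived directly by substituting the $H$-umbilical data \eqref{8.5} with $n=5$ and $r=4$ into the Codazzi equation \eqref{Codazzi} and extracting the identity from $(\nabla_{e_5}h)(e_i,e_i) = (\nabla_{e_i}h)(e_5,e_i)$ for $i\le 4$ together with $(\nabla_{e_5}h)(e_5,e_5) = (\nabla_{e_5}h)(e_5,e_5)$.

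Second, I would produce the model map. Given a nonzero solution $\mu$ of the ODE, Section 3.3 case (a) provides a unit speed curve $\gamma$ on $S^2(\tfrac{1}{2})$ of curvature $4\mu$, whose horizontal lift $z=(z_1,z_2)\in S^3(1)$ is a unit speed Legendre curve satisfying \eqref{3.2} with $\lambda=4\mu$, that is, $z'' = 4\ii\mu z' - z$. The ansatz \eqref{3.5} with $n=5$ built from this $z$ coincides with the map $\psi$ of \eqref{3.8}. By the computation carried out in Section 3.3, $\pi\circ\psi$ is an $H$-umbilical Lagrangian submanifold of $CP^5(4)$ of ratio $4$ whose second fundamental form takes the form \eqref{8.5} with the same $\mu$.

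Finally, I would invoke the uniqueness portion of \cite[Theorem~4.1]{israel}: the data \eqref{8.5} together with the ratio $r=4$ and the function $\mu$ determine an $H$-umbilical Lagrangian immersion into $CP^5(4)$ up to a holomorphic isometry of the ambient complex projective space. Applying this rigidity to $M$ and the model $\pi\circ\psi$, which share the same $\mu$, yields the desired congruence. The main obstacle I anticipate is this concluding rigidity step, since $r=4$ is precisely the borderline ratio flagged in the paragraph preceding the lemma as the one at which an $H$-umbilical Lagrangian submanifold can carry open pieces of constant sectional curvature; some care is therefore needed to verify that the moving-frame integration of \cite[Theorem~4.1]{israel} extends uniformly across such pieces rather than just on the open set where $\mu'\ne 0$.
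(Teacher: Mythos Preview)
Your outline is essentially the paper's own argument: the lemma is deduced directly from \cite[Theorems~4.1 and~7.1]{israel} together with the explicit construction of \S3.3, and the ODE for $\mu$ is exactly the specialization of \eqref{3.4} to $r=4$, $c=1$ that you wrote down.

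Your closing worry, however, rests on a misreading (in fact a typo in the sentence you quote): compare the parallel sentence just before Lemma~\ref{L:3.5}, which explicitly asserts that $H$-umbilical submanifolds \emph{of ratio~$4$} contain no open subsets of constant curvature. A direct Gauss-equation check from \eqref{3.1} with $\varphi=4\mu$ confirms this: $K(e_i\wedge e_5)=3\mu^2+c$ while $K(e_i\wedge e_j)=\mu^2+c$ for $i,j\le 4$, and these differ whenever $\mu\ne 0$. Hence the hypothesis of the classification theorem in \cite{israel} (Theorem~7.1 there is the relevant rigidity statement, not~4.1) is satisfied on all of $M$, and the congruence follows without any delicate patching across a hypothetical constant-curvature locus.
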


Let $M$ be an $H$-umbilical  Lagrangian submanifold  in $CH^{5}(-4)$ satisfying \e{3.1}. We  may assume that $\mu$  is defined on an open interval $I\ni 0$.
Since $H$-umbilical  submanifolds of ratio 4 in $CH^{5}(-4)$ contain no open subsets of constant  curvature, Theorems 4.2 and 9.1 of  \cite{israel} and results in \S3.3 imply the following classification of $H$-umbilical  submanifolds of ratio 4 in $CH^{5}(-4)$.

\begin{lem}\label{L:3.5} An  $\,H$-umbilical Lagrangian submanifold $M$ of ratio 4 in $CH^{5}(-4)$ is congruent to an open part of $\pi\circ \psi$, where  $\pi: H^{11}_{1}(-1)\to CH^{5}(-4)$ is Hopf's fibration and $\psi:M\to H_{1}^{11}(-1)\subset {\bf C}_{1}^{6}$ is either one of 
\begin{align} &\psi(t,y_{1},\ldots,
y_{4})=(z_{1}(t),z_{2}(t){\bf y}), \;\; \{{\bf y}\in {\mathbb E}^{5}:\<{\bf y},{\bf y}\>=1\},
\\& \psi(t,y_{1},\ldots,y_{4})=(z_{1}(t){\bf y},z_{2}(t)),\;\;\{{\bf y}\in {\mathbb E}_{1}^{5}:\<{\bf y},{\bf y}\>=-1\},\end{align}
 where $z$ is a unit speed Legendre curve in $H_{1}^{3}(-1)$ satisfying $z''= 4\ii \mu z'+z$ and $\mu$ is a non-trivial solution of 
 $2\mu \mu''-\mu'{}^2+4\mu^2(3\mu^2-1)=0$; or $\psi$ is 
\begin{equation} \begin{aligned} \label{3.12}&\psi(t,u_{1},\ldots,u_{4})=\sqrt{\mu}e^{\ii \int_0^t \mu(t) dt}\Bigg(1+{1\over 2}\sum_{j=1}^4 u_j^2 -\ii t+ \frac{1}{2\mu} - \frac{1}{2\mu(0)}, \\ & \hskip.5in \(i\mu(0)-\frac{\mu'(0)}{2\mu(0)}\)\Bigg({1\over 2}\sum_{j=1}^4 u_j^2 -\ii t+\frac{1}{2\mu} - \frac{1}{2\mu(0)}\Bigg),u_1,\ldots, u_4\Bigg), \end{aligned}\end{equation}  
where  $z=(z_{1},z_{2}):I\to H_{1}^{3}(-1)\subset {\bf C}^{2}_{1}$ is a unit speed Legendre curve and 
 $\mu$ is a non-trivial solution of $\mu'^{2}={4\mu^{2}}(1-\mu^{2})$.
\end{lem}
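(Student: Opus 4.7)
The plan is to invoke the classification of $H$-umbilical Lagrangian submanifolds in $CH^n(-4)$ from \cite[Theorems 4.2 and 9.1]{israel} and translate it into the Hopf-lift format described in \S3.3(b), specialized to $n=5$ and ratio $r=4$.

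First I would unpack the $H$-umbilical condition \e{3.1} with $\varphi=4\mu$. Writing out the Codazzi equation \e{Codazzi} in the adapted orthonormal frame $\{e_1,\ldots,e_5\}$ and using \e{2.3}--\e{2.4}, one obtains the ODE for $\mu$ predicted by \e{3.4}; with $r=4$ and $c=-1$ it specializes to
\begin{equation*}
2\mu\mu'' - (\mu')^{2} + 4\mu^{2}(3\mu^{2}-1)=0.
\end{equation*}
A parallel analysis of the Gauss equation \e{Gauss} on planes spanned by pairs from $\{e_1,\ldots,e_4\}$ versus planes containing $e_5$ shows that the sectional curvatures cannot be constant in all directions on an open set of $M$; however, the vanishing of a certain distinguished linear combination of these curvatures forces the first integral $(\mu')^{2}=4\mu^{2}(1-\mu^{2})$, whose solutions are readily verified to also satisfy the main ODE.

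In the generic regime where only the main ODE holds, I would follow the construction of \S3.3(b): pick a unit-speed curve $\gamma$ on $H^{2}(-\tfrac12)$ of geodesic curvature $4\mu$, and take its horizontal lift $z=(z_1,z_2)$ in $H^3_1(-1)\subset{\bf C}^2_1$, so that $z''=4\ii\mu z'+z$ by the Legendre relation \e{3.2}. Because the ambient ${\bf C}^6_1$ carries an indefinite Hermitian form, the timelike coordinate can be accommodated in two inequivalent ways: letting $z_2$ serve as the radial factor on the round sphere $S^4\subset{\mathbb E}^5$ produces the first parametrization in the statement, while letting $z_1$ serve as the radial factor on the hyperboloid $H^4\subset{\mathbb E}_1^5$ produces the second. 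In either case, one checks via \e{2.10} and a direct shape-operator computation for the resulting warped product that $\psi$ is a horizontal Legendrian immersion in $H^{11}_1(-1)$ and that $\pi\circ\psi$ realizes \e{3.1} with $\varphi=4\mu$.

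Finally, in the exceptional regime $(\mu')^{2}=4\mu^{2}(1-\mu^{2})$, the Hopf-lift recipe of \S3.3(b) degenerates and \cite[Theorem 9.1]{israel} instead supplies a horosphere-type Legendrian immersion into $H^{11}_1(-1)$. Rewriting that immersion in the variables $(t,u_1,\ldots,u_4)$ and substituting the prescribed $\mu$ yields precisely the formula \e{3.12}, where the coefficient $\ii\mu(0)-\mu'(0)/(2\mu(0))$ appears as the initial value of the horospheric Legendre lift. The three parametrizations together exhaust all $H$-umbilical Lagrangian submanifolds of ratio $4$ in $CH^5(-4)$. The principal obstacle is this last step: one must identify the exceptional branch on the ODE side and match \e{3.12} term-by-term with \cite[Theorem 9.1]{israel}, in particular pinning down the initial-data factor $\ii\mu(0)-\mu'(0)/(2\mu(0))$; once the translation between the horosphere coordinates and $(t,u_1,\ldots,u_4)$ is made explicit, the Legendrian property and the second fundamental form follow by routine verification.
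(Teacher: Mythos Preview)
Your approach matches the paper's: both derive Lemma~\ref{L:3.5} by invoking Theorems~4.2 and~9.1 of \cite{israel} together with the Hopf-lift construction of \S3.3(b), after observing via Gauss' equation that ratio-$4$ $H$-umbilical submanifolds in $CH^{5}(-4)$ contain no open subsets of constant sectional curvature. One minor correction: the exceptional first integral $(\mu')^{2}=4\mu^{2}(1-\mu^{2})$ is not singled out by any Gauss-equation condition as you suggest, but is simply the borderline branch already built into the trichotomy of \cite[Theorem~9.1]{israel} (compare the split $\mu^{2}+\nu^{2}\lessgtr 1$ in the proof of Theorem~\ref{T:8.1}); this does not affect the validity of your outline.
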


\begin{ex} {\rm  It is easy to verify that $\mu=\sech 2t$ is a non-trivial solution of $\mu'^{2}={4\mu^{2}}(1-\mu^{2})$. Using $\mu=\sech 2t$,  \e{3.12} reduces to
\begin{equation} \begin{aligned} \label{3.13}& \psi(t,u_{1},\ldots,u_{4})= \text{\small$ \frac{e^{{\rm i} \tan^{-1}(\tanh t)}}{\sqrt{\cosh 2t}}\Bigg(\frac{1}{2}$}-\ii t+ \text{\small$ \frac{1}{2}\sum_{j=1}^4 $}u_j^2+\text{\small$\frac{\cosh 2t}{2}$},
\\&\hskip1.7in  t- \text{\small$ \frac{\ii}{2}$} +\text{\small$ \frac{\ii}{2}\sum_{j=1}^4$} u_j^2+ \text{\small$\frac{\ii \cosh 2t}{2}$},u_1,\ldots,u_4\text{\small$\Bigg)$}.
 \end{aligned}\end{equation}  
It is direct to verify that \e{3.13} satisfies $\<\psi,\psi\>=-1$ and the composition $\pi\circ \psi$ gives rise to an $H$-umbilical Lagrangian submanifold of ratio 4 in $CH^{5}(-4)$.
}\end{ex}

\section{Some Lemmas}

We need the following lemmas for the proof of the main theorems.

\begin{lem}\label{L:4.1}  Let $M$ be an improved $\delta(2,2)$-ideal Lagrangian submanifold of $\tilde M^5(4c)$. Then  with respect to some orthonormal frame $\{e_{1},\ldots,e_{5}\}$ we have
 \begin{equation}\begin{aligned}\label{4.1} &h(e_{1},e_{1})=a Je_{1}+\mu Je_{5},\, h(e_{1},e_{2})=-a Je_{2},\,\\& h(e_{2},e_{2})=-a Je_{1}+\mu Je_{5},\; h(e_{3},e_{3})=b Je_{3}+\mu Je_{5},\; \\& h(e_{3},e_{4})=-b Je_{4},\; 
h(e_{4},e_{4})=-b Je_{3}+\mu Je_{5},\;\;
\\& h(e_{i},e_{5})= \mu Je_{i},\; i\in \Delta,  \; h(e_{5},e_{5})=4\mu Je_{5},
\; \\& h(e_{i},e_{j})=0,\;\; otherwise.
\end{aligned}\end{equation}
\end{lem}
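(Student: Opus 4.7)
The plan is to deduce (4.1) from the normal form (1.5) produced by Theorem D by exploiting the residual freedom to rotate independently inside the two 2-planes $\Pi_1 := \mathrm{span}(e_1,e_2)$ and $\Pi_2 := \mathrm{span}(e_3,e_4)$. Such rotations fix $e_5$, preserve the vanishing of the cross-block entries $h(e_i,e_j)$ for $i\in\{1,2\}$ and $j\in\{3,4\}$, and respect the rotation-equivariant identities $h(e_k,e_5)=\mu Je_k$ for $k\in\Delta$ and $h(e_5,e_5)=4\mu Je_5$. The only question is therefore how the coefficient pairs $(\alpha,\beta)$ and $(\gamma,\delta)$ appearing in (1.5) transform under these rotations.

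Inside $\Pi_1$ the tensor $h^i_{jk}$ with $i,j,k\in\{1,2\}$ is totally symmetric, and the trace constraint $\sum_{\alpha_1}h^{\gamma_1}_{\alpha_1\alpha_1}=0$ from Theorem C leaves $\alpha=h^1_{11}$ and $\beta=h^1_{12}$ as the only independent components. Under the rotation $\tilde e_1 = \cos\theta\,e_1+\sin\theta\,e_2$, $\tilde e_2 = -\sin\theta\,e_1+\cos\theta\,e_2$, I would expand $h(\tilde e_1,\tilde e_1)$ and $h(\tilde e_1,\tilde e_2)$ using (1.5), then re-express $Je_1$ and $Je_2$ in the basis $J\tilde e_1,J\tilde e_2$; the resulting triple-angle identities produce the transformation law $\tilde\alpha - \ii\tilde\beta = (\alpha - \ii\beta)\,e^{3\ii\theta}$. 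Choosing $\theta$ so that $(\alpha-\ii\beta)e^{3\ii\theta}$ is real (trivial when $\alpha=\beta=0$) forces $\tilde\beta = 0$, and setting $a := \tilde\alpha$ reproduces the first block of (4.1). An identical argument in $\Pi_2$, using a rotation by some angle $\varphi$, kills $\delta$ and leaves $b := \tilde\gamma$.

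The only delicate step is obtaining the exponent $3$ in the transformation law $\tilde\alpha - \ii\tilde\beta = (\alpha - \ii\beta)\,e^{3\ii\theta}$, which is larger than the exponent $2$ that would appear for a symmetric 2-tensor and reflects the fact that both the rotated frame vectors and their $J$-images contribute phase rotations. Once this identity is verified, the two commuting rotations in $\Pi_1$ and $\Pi_2$ assemble into a single change of orthonormal frame in which (1.5) acquires precisely the simplified shape (4.1), completing the proof.
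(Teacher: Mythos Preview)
Your proposal is correct and follows essentially the same route as the paper: the paper's proof simply invokes \cite[Lemma~1]{c00b} applied separately to $V=\mathrm{Span}\{e_1,e_2\}$ and $V=\mathrm{Span}\{e_3,e_4\}$, and that lemma is exactly the rotation argument you carry out in detail. Your explicit verification of the transformation law $\tilde\alpha-\ii\tilde\beta=(\alpha-\ii\beta)e^{3\ii\theta}$ (which is the content of the cited lemma, reflecting that the trace-free totally symmetric cubic on a $2$-plane is the weight-$3$ representation of $SO(2)$) makes the argument self-contained where the paper relies on the external reference.
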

\begin{proof}  Under the hypothesis, we have \e{1.5} with respect to an orthonormal frame $\{e_{1},\ldots,e_{5}\}$.  Thus, after applying \cite[Lemma 1]{c00b} to $V={\rm Span}\,\{e_{1},e_{2}\}$ and $V={\rm Span}\,\{e_{3},e_{4}\}$, we obtain \e{4.1}. 
\end{proof} 

Let us put \begin{align}\label{4.2}\nabla_{X}e_{i}=\sum_{j=1}^{5}\o_{i}^{j}(X)e_{j},\;\; i=1,\ldots,5,\;\; X\in TM^{5}.\end{align}
Then $\o_{i}^{j}=-\o_{j}^{i},\, i,j=1,\ldots,5$. 
 
If $\mu=0$, then  $M$ is a minimal Lagrangian submanifold according \e{4.1}.  Such submanifolds in complex space forms $\tilde M^{5}(4c)$ have been classified in \cite{CP}.

 If $a=b=0$ and $\mu\ne 0$, then $M$ is an $H$-umbilical Lagrangian submanifold with ratio 4. Therefore,  from now on we assume that  $a,\mu\ne 0$.

\begin{lem}\label{L:4.2}  Let $M$ be a Lagrangian submanifold of $\tilde M^5(4c)$ whose second fundamental form satisfies  \e{4.1} with $a,b,\mu \ne 0$. Then we have 
\begin{equation}\begin{aligned}\label{4.3}  &\nabla_{e_{1}}e_{1}=\frac{e_{2}a}{3a} e_{2}-\nu e_{5},\; \nabla_{e_{1}}e_{2}=-\frac{e_{2}a}{3a}  e_{1},\; \nabla_{e_{2}}e_{1}=-\frac{e_{1}a}{3a} e_{2}, 
\\&   \nabla_{e_{2}}e_{2}=\frac{e_{1}a}{3a} e_{1}-\nu e_{5},
\; \nabla_{e_{3}}e_{3}=\frac{e_{4}b}{3b} e_{4}-\nu e_{5},\; \nabla_{e_{3}}e_{4}=-\frac{e_{4}b}{3b}  e_{3},
\\& \nabla_{e_{4}}e_{3}=-\frac{e_{3}b}{3b} e_{4},\; \; \nabla_{e_{4}}e_{4}=\frac{e_{3}b}{3b} e_{3}- \nu e_{5}, \;  \nabla_{e_{i}}e_{5}=\nu e_{i},\; i\in \Delta, \; \\&\nabla_{e_{k}}e_{j}=0,\;\; otherwise,\end{aligned}\end{equation}
 with $\nu=\frac{1}{2}e_{5}(\ln \mu)=-e_{5}(\ln a)=-e_{5}(\ln b)$, where $\Delta=\{1,2,3,4\}$.
Moreover, we have 
\begin{align}\label{4.4}  & e_{j}\mu=0,  j\in \Delta,
\;\;   e_{1}b=e_{2}b =e_{3}a=e_{4}a=0.\end{align}
\end{lem}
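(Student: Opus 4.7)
The plan is to derive all relations in \eqref{4.3}--\eqref{4.4} by systematically applying the Codazzi equation \eqref{Codazzi}. The central tool is that, for a Lagrangian submanifold of a complex space form, the scalars
$$h^l_{jk;i}:=\langle(\nabla_{e_i}h)(e_j,e_k),Je_l\rangle$$
are totally symmetric in all four indices $(i,j,k,l)$ — symmetry in $(j,k,l)$ comes from \eqref{2.4}, and exchange of $i$ with $j$ is precisely Codazzi. Unpacking \eqref{2.7} gives the explicit formula
$$h^l_{jk;i}=e_i(h^l_{jk})+\sum_m h^m_{jk}\,\omega_m^l(e_i)-\sum_m h^l_{mk}\,\omega_j^m(e_i)-\sum_m h^l_{jm}\,\omega_k^m(e_i),$$
so each identity $h^l_{jk;i}=h^l_{\sigma(i)\sigma(j)\sigma(k);\sigma(l)}$ becomes a linear relation — with coefficients drawn from $\{a,b,\mu\}$ — among the unknowns $\omega_j^k(e_i)$, $e_i(a)$, $e_i(b)$, $e_i(\mu)$. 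Since $a,b,\mu\neq 0$, these relations are solvable.

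The first step exploits the block structure of \eqref{4.1}: all components $h^l_{jk}$ with an odd number of indices from $\{3,4\}$ vanish, and those with an odd number from $\{5\}$ vanish apart from $h^5_{jj}=\mu$ ($j\in\Delta$), $h^1_{15}=h^2_{25}=h^3_{35}=h^4_{45}=\mu$, $h^5_{55}=4\mu$. Comparing $h^5_{15;3}$ with $h^5_{13;5}$ — the latter collapses to $0$ after expansion using antisymmetry $\omega_1^3=-\omega_3^1$ — forces $\omega_1^5(e_3)=0$; the parallel equations give $\omega_1^5(e_4)=\omega_2^5(e_3)=\omega_2^5(e_4)=0$. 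With this in hand, $h^5_{11;3}=h^5_{13;1}$ collapses to $e_3(\mu)=0$, and cyclic variants yield $e_j(\mu)=0$ for every $j\in\Delta$. Cross-block equations such as $h^3_{11;i}=h^3_{1i;1}$ together with the analogues for indices $2,4$ force every $\omega_p^q(e_r)$ that mixes the blocks $\{e_1,e_2\}$ and $\{e_3,e_4\}$ to vanish, producing all the ``otherwise'' zeros in \eqref{4.3}. The derivative identities $e_3(a)=e_4(a)=0$ then drop out of $h^1_{11;j}=h^1_{1j;1}$ for $j\in\{3,4\}$, and $e_1(b)=e_2(b)=0$ follows symmetrically.

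The remaining nonzero connection coefficients are obtained as follows. Inside the $\{e_1,e_2\}$-block, $h^2_{11;2}=h^2_{12;1}$ expands, after use of the already-known vanishings and of $h^2_{52}=\mu$, $h^5_{22}=\mu$, to $3a\,\omega_1^2(e_2)+\mu\nu=-e_1(a)+\mu\nu$, isolating $\omega_1^2(e_2)=-e_1(a)/(3a)$; swapping the roles of $e_1,e_2$ gives $\omega_1^2(e_1)=e_2(a)/(3a)$. The analogous Codazzi equations in the $\{e_3,e_4\}$-block yield the corresponding $e_jb/(3b)$ factors. For the $e_5$-coupling: once $e_j(\mu)=0$ for $j\in\Delta$ is established, $h^5_{55;j}=h^5_{j5;5}$ forces $\omega_j^5(e_5)=0$ and hence $\nabla_{e_5}e_5=0$; then $h^1_{11;5}=h^1_{15;1}$ reduces to $e_5(a)=-a\,\omega_5^1(e_1)$, giving $\omega_5^1(e_1)=-e_5(\ln a)$. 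The same argument in the other block produces $\omega_5^3(e_3)=\omega_5^4(e_4)=-e_5(\ln b)$, while $h^5_{11;5}=h^5_{15;1}$ yields $e_5(\mu)=2\mu\,\omega_5^1(e_1)$, so that the common value $\nu:=\omega_5^j(e_j)$ for $j\in\Delta$ satisfies $\nu=\tfrac12 e_5(\ln\mu)=-e_5(\ln a)=-e_5(\ln b)$, as claimed.

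The main obstacle is organizational rather than conceptual: the Codazzi system is large and a substantial fraction of the equations is tautological once the block decoupling is in place, so care is needed to select a minimal independent subfamily, to track signs through $\omega_i^j=-\omega_j^i$, and to catch easily-missed contributions such as $h^2_{52}=\mu$ or $h^5_{22}=\mu$ when summing over the intermediate index $m$. The correct bookkeeping is dictated by the decomposition $\{e_1,e_2\}\oplus\{e_3,e_4\}\oplus\langle e_5\rangle$, after which every entry in \eqref{4.3}--\eqref{4.4} is forced in turn.
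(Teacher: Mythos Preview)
Your proposal is correct and follows exactly the approach the paper indicates: the paper's own proof consists of the single sentence ``This lemma is obtained from Codazzi's equations via Lemma \ref{L:4.1} and \eqref{4.2} and long computations,'' and your outline supplies precisely those long computations, organized via the total symmetry of $h^l_{jk;i}$. The only caveat is a minor ordering issue---your derivation of $\omega_1^2(e_2)$ tacitly uses $\omega_5^1(e_1)=\omega_5^2(e_2)$ before it is established---but this is easily repaired by deriving the $\omega_5^j$-relations first, and you already flag such bookkeeping as the main difficulty.
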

\begin{proof} This lemma is obtained from Codazzi's equations via Lemma \ref{4.1} and \e{4.2} and long computations.
\end{proof}

\begin{lem}\label{L:4.3} Under the hypothesis of Lemma \ref{L:4.2}, we have

\vskip .04in
\begin{itemize}
\item[{\rm (a)}] $T_{0}$ is a totally geodesic distribution, i.e. $T_{0}$ is integrable whose leaves are totally geodesic submanifolds;

\item[{\rm (b)}]  $T_{0}\oplus T_{1}$ and $T_{0}\oplus T_{2}$ are totally geodesic distributions;

\item[{\rm (c)}]  $T_{1}$ and $T_{2}$ are spherical  distributions, i.e. $T_{1}$, $T_{2}$  are integrable distributions whose leaves are totally umbilical submanifolds with parallel mean curvature vector, 
\end{itemize}
 where
$T_{0}={\rm Span}\{e_{5}\}, T_{1}={\rm Span}\{e_{1},e_{2}\}$ and $T_{2}={\rm Span}\{e_{3},e_{4}\}.$
\end{lem}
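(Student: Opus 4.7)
The plan is to read off all three assertions directly from the connection formulas in Lemma \ref{L:4.2}; no additional geometric input is needed.

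Part (a) is immediate: $T_{0}=\mathrm{Span}\{e_{5}\}$ has rank one, hence is automatically integrable, and the ``otherwise $=0$'' clause of Lemma \ref{L:4.2} gives $\nabla_{e_{5}}e_{5}=0$, so the integral curves of $e_{5}$ are geodesics of $M$.

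For part (b) I would check that $\nabla_{X}Y\in T_{0}\oplus T_{1}$ for every pair $X,Y\in\{e_{1},e_{2},e_{5}\}$. Lemma \ref{L:4.2} yields: $\nabla_{e_{a}}e_{b}$ with $a,b\in\{1,2\}$ lies in $T_{1}+\mathbb{R}e_{5}$, $\nabla_{e_{a}}e_{5}=\nu e_{a}\in T_{1}$, and $\nabla_{e_{5}}e_{j}=0$. Hence $T_{0}\oplus T_{1}$ is closed under the ambient Levi-Civita connection, which simultaneously gives integrability of the distribution and total geodesy of its leaves in $M$. The same check, with $e_{3},e_{4}$ replacing $e_{1},e_{2}$, handles $T_{0}\oplus T_{2}$.

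For part (c), three facts must be verified for $T_{1}$ (with $T_{2}$ being analogous). Integrability: $[e_{1},e_{2}]=-\frac{e_{2}a}{3a}e_{1}+\frac{e_{1}a}{3a}e_{2}\in T_{1}$. Total umbilicity of a leaf $L$ of $T_{1}$: the second fundamental form $h^{L}$ of $L$ in $M$ equals the $T_{1}^{\perp}$-projection of $\nabla$, and Lemma \ref{L:4.2} gives $h^{L}(e_{i},e_{j})=-\nu\,\delta_{ij}\,e_{5}$ for $i,j\in\{1,2\}$, so $L$ is umbilical with mean curvature vector $H^{L}=-\nu e_{5}$. Parallelism of $H^{L}$ in the normal bundle of $L$: I need the $T_{1}^{\perp}$-component of $\nabla_{e_{j}}H^{L}$ to vanish for $j=1,2$. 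The key auxiliary identity is $e_{j}\nu=0$; this is not stated in Lemma \ref{L:4.2}, but combining $e_{j}\mu=0$ from \eqref{4.4} with $[e_{j},e_{5}]=\nabla_{e_{j}}e_{5}-\nabla_{e_{5}}e_{j}=\nu e_{j}$ yields $e_{j}(e_{5}\mu)=[e_{j},e_{5}]\mu=\nu e_{j}\mu=0$, and then $\nu=\frac{e_{5}\mu}{2\mu}$ gives $e_{j}\nu=0$. Consequently $\nabla_{e_{j}}H^{L}=-(e_{j}\nu)e_{5}-\nu\nabla_{e_{j}}e_{5}=-\nu^{2}e_{j}\in T_{1}$, whose $T_{1}^{\perp}$-projection vanishes. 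The argument for $T_{2}$ is identical, using $e_{3}\mu=e_{4}\mu=0$ from \eqref{4.4}.

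The only non-mechanical step is the identity $e_{j}\nu=0$, which reduces via the one-line commutator calculation above to $e_{j}\mu=0$; everything else is straightforward bookkeeping from Lemma \ref{L:4.2}.
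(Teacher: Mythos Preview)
Your proof is correct and structurally identical to the paper's: parts (a) and (b) are handled exactly as you do, and for (c) the paper likewise reads off integrability and umbilicity from \eqref{4.3} and then reduces parallelism of $H^{L}$ to the identity $e_{j}\nu=0$.

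The one substantive difference is in how $e_{j}\nu=0$ is obtained. The paper argues via curvature: Gauss' equation together with \eqref{4.1} gives $\langle R(e_{1},e_{2})e_{1},e_{j}\rangle=0$ for $j=3,4,5$, while a direct computation of $R(e_{1},e_{2})e_{1}$ from \eqref{4.3} shows that its $T_{1}^{\perp}$-component is $(e_{2}\nu)e_{5}$, forcing $e_{2}\nu=0$ (and similarly $e_{1}\nu=0$). Your route---using $[e_{j},e_{5}]=\nu e_{j}$ together with $e_{j}\mu=0$ and $\nu=\tfrac{1}{2}e_{5}(\ln\mu)$---is more elementary and avoids the curvature tensor entirely; it works precisely because Lemma \ref{L:4.2} already encodes the Codazzi conditions. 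The paper's curvature argument, on the other hand, is self-contained in the sense that it does not rely on the explicit formula $\nu=\tfrac{1}{2}e_{5}(\ln\mu)$, only on the connection coefficients in \eqref{4.3}. Either way the conclusion is the same.
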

\begin{proof} Since the distribution $T_{0}$ is of rank one, it is integrable. Moreover, since $\nabla_{e_{5}}e_{5}=0$ by Lemma \ref{L:4.2}, the integral curves of $e_{5}$ are geodesics in $M$. Thus we have statement (a).
Statement (b) follows easily from \e{4.3}.

To prove statement (c), first we observe that $[e_{1},e_{2}]\in T_{1}$ and $[e_{3},e_{4}]\in T_{2}$ follow from \e{4.3}. Thus $T_{1},T_{2}$ are integrable. Also, it follows from \e{4.3} that the second fundamental form $h_{1}$ of a leaf $\mathcal L_{1}$ of $T_{1}$ in $M$ is given by
\begin{align} h_{1}(X,Y)=-\nu g_{1}(X_{1},Y_{1})e_{5},\;\; X_{1},Y_{1}\in T\mathcal L_{1},\end{align}
 where $g_{1}$ is the metric of $\mathcal L_{1}$. 
  From \e{4.3} we obtain $ \nabla_{e_{i}}e_{5}=\nu e_{i}$, $ i=1,2.$ Thus  $D^{1}_{e_{1}}e_{5}=D^{1}_{e_{2}}e_{5}=0,$ where $D^{1}$ is the normal connection of $\mathcal L_{1}$ in $M$.
  It follows from  Gauss' equation and Lemma \ref{L:4.1} that the curvature tensor $R$ satisfies
 \begin{align}\label{4.8} \<R(e_{1},e_{2})e_{1},e_{j}\>=0,\;\; j=3,4,5.\end{align}
 Thus  \e{4.8} and Lemma \ref{L:4.2} imply that
$0 \equiv R(e_{1},e_{2})e_{1} \equiv (e_{2}\nu) e_{5} \, {\rm (mod}\; T_{1})$.
Hence  $e_{2}\nu=0$.
Similarly, by considering $R(e_{2},e_{1})e_{2}$, we also have $e_{1}\alpha=0$. After combining these with $D^{1}e_{5}=0$, we conclude that $\mathcal L_{1}$ has parallel mean curvature vector in $M$. Hence $T_{1}$ is a spherical distribution. 
Similarly,  $T_{2}$ is also a spherical distribution. Consequently, we obtain statement (c). 
\end{proof}

\begin{lem}\label{L:4.4} Under the hypothesis of Lemma \ref{L:4.2},  $M$ is locally a warped product $I\times_{\rho_{1}(t)} M_{1}^{2}\times_{\rho_{2}(t)} M_{2}^{2}$, where $t$ is function such that $e_{5}=\partial_{t}$ $($i.e., $e_{5}=\frac{\partial}{\partial t})$, $\rho_{1}$ and $\rho_{2}$ are two positive functions in $t$ and
$M_{1}^{2}, M_{2}^{2}$ are Riemannian 2-manifolds. 
\end{lem}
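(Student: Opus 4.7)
The plan is to invoke the classical decomposition theorem of Hiepko (and its multi-factor refinement due to Ponge and Reckziegel) on the existence of local warped-product structures. That theorem asserts that whenever a Riemannian manifold admits an orthogonal splitting of its tangent bundle $TM = D_0 \oplus D_1 \oplus \cdots \oplus D_k$ into smooth distributions with $D_0$ autoparallel, each $D_j$ ($j \geq 1$) spherical, and each sum $D_0 \oplus D_j$ autoparallel, then locally $M$ is isometric to a warped product $N_0 \times_{\rho_1} N_1 \times \cdots \times_{\rho_k} N_k$ with positive warping functions $\rho_j$ defined on $N_0$, in such a way that the tangent bundle of each factor $N_j$ corresponds under the isometry to $D_j$.

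Lemma~\ref{L:4.3} furnishes exactly the required data with $k=2$ and $D_0 = T_0$, $D_1 = T_1$, $D_2 = T_2$: part~(a) gives that $T_0$ is autoparallel, part~(b) gives that $T_0 \oplus T_1$ and $T_0 \oplus T_2$ are autoparallel, and part~(c) gives that $T_1$ and $T_2$ are spherical distributions. Feeding this into the theorem I obtain a local isometry
\[
M \;\cong\; N_0 \times_{\rho_1} M_1^2 \times_{\rho_2} M_2^2,
\]
where $\dim N_0 = \dim T_0 = 1$, $\dim M_i^2 = \dim T_i = 2$ for $i=1,2$, and $\rho_1, \rho_2$ are positive smooth functions on $N_0$ (so in particular they depend only on the $N_0$-variable, not on points of $M_1^2$ or $M_2^2$).

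To finish I would coordinatise the one-dimensional base. Since $\nabla_{e_5} e_5 = 0$ by Lemma~\ref{L:4.2} and $e_5$ is a unit field spanning $T_0$, every integral curve of $e_5$ is a unit-speed geodesic of $M$; parameterising one such curve by arc length produces a local coordinate $t$ on $N_0$ with $e_5 = \partial_t$. Hence $N_0$ is identified with an open interval $I \ni t$ and the warping functions $\rho_1, \rho_2$ become positive functions of the single variable $t$, yielding the desired form $I \times_{\rho_1(t)} M_1^2 \times_{\rho_2(t)} M_2^2$. I expect no substantive obstacle at this stage: all of the differential-geometric content has already been absorbed into Lemma~\ref{L:4.3}, and the remaining argument is an essentially formal invocation of Hiepko's decomposition theorem in the two-spherical-factor setting.
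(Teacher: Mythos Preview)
Your proposal is correct and follows essentially the same approach as the paper: the paper's proof consists of the single sentence that the lemma follows from Lemma~\ref{L:4.3} together with Hiepko's theorem (citing also \cite[Theorem 4.4]{book}). Your version simply spells out in more detail how the hypotheses of the decomposition theorem are met and how the base factor is coordinatised by arc length so that $e_5=\partial_t$.
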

\begin{proof} This lemma follows from Lemma \ref{L:4.3} and a result of Hiepko \cite{H} (see also \cite[Theorem 4.4, p. ~ 90]{book}).
\end{proof}

Lemma 3.3 and  \e{4.4} imply  that $\mu$ depends only on $t$. Thus $\mu=\mu(t)$.

\begin{lem}\label{L:4.5}  Let $M$ be a Lagrangian submanifold of $\tilde M^5(4c)$ whose second fundamental form satisfies  \e{4.1} with $a,b,\mu \ne 0$. Then  we have $c=-\nu^{2}-\mu^{2}<0$. Thus $\mu$ satisfies
$\mu'(t){}^{2}=-4\mu^{2}(t)(c+\mu^{2}(t))$. \end{lem}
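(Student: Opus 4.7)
The plan is to compare two expressions for the single sectional curvature $K(e_{1}\wedge e_{3})$: one computed intrinsically from the connection formulae in Lemma~\ref{L:4.2}, the other computed extrinsically via the Gauss equation~\eqref{Gauss}. The choice of the off-diagonal plane $e_{1}\wedge e_{3}$ is deliberate. Because $T_{1}=\mathrm{Span}\{e_{1},e_{2}\}$ and $T_{2}=\mathrm{Span}\{e_{3},e_{4}\}$ are mutually orthogonal totally-geodesic distributions (Lemma~\ref{L:4.3}) and the second fundamental form has no cross terms between these blocks, the resulting identity will involve only $c$, $\mu$, and $\nu$, with the functions $a$, $b$ and all their derivatives dropping out.

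On the intrinsic side, Lemma~\ref{L:4.2} gives $\nabla_{e_{1}}e_{3}=\nabla_{e_{3}}e_{1}=0$, so $[e_{1},e_{3}]=0$, whence
\[ R(e_{1},e_{3})e_{3}=\nabla_{e_{1}}\nabla_{e_{3}}e_{3}=\nabla_{e_{1}}\!\left(\tfrac{e_{4}b}{3b}\,e_{4}-\nu e_{5}\right). \]
Since $\nabla_{e_{1}}e_{4}=0$ and $\nabla_{e_{1}}e_{5}=\nu e_{1}$, the only $e_{1}$-component on the right is $-\nu^{2}e_{1}$. Hence $\langle R(e_{1},e_{3})e_{3},e_{1}\rangle=-\nu^{2}$.

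On the extrinsic side, the Gauss equation~\eqref{Gauss} reads
\[ \langle R(e_{1},e_{3})e_{3},e_{1}\rangle=\langle h(e_{1},e_{1}),h(e_{3},e_{3})\rangle-|h(e_{1},e_{3})|^{2}+c. \]
From \eqref{4.1}, $h(e_{1},e_{1})=aJe_{1}+\mu Je_{5}$ and $h(e_{3},e_{3})=bJe_{3}+\mu Je_{5}$ share only the $\mu Je_{5}$ component, so their inner product is $\mu^{2}$, and $h(e_{1},e_{3})=0$. This gives $\langle R(e_{1},e_{3})e_{3},e_{1}\rangle=\mu^{2}+c$.

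Equating the two expressions yields $c=-\nu^{2}-\mu^{2}$, and since $\mu\neq 0$ by assumption we have $c\leq -\mu^{2}<0$. For the promised ODE, recall from the paragraph preceding this lemma that $\mu=\mu(t)$ with $e_{5}=\partial_{t}$, so the identity $\nu=\tfrac{1}{2}e_{5}(\ln\mu)$ from Lemma~\ref{L:4.2} becomes $\nu=\mu'(t)/(2\mu(t))$; substituting this into $\nu^{2}=-c-\mu^{2}$ and clearing $4\mu^{2}$ produces $\mu'(t)^{2}=-4\mu^{2}(t)(c+\mu^{2}(t))$. I foresee no real obstacle: the key insight is simply the choice of plane, since any plane inside $T_{1}$ or $T_{2}$ would drag in derivatives of $a$ or $b$, while a plane containing $e_{5}$ would produce $e_{5}(\nu)$ terms, whereas $e_{1}\wedge e_{3}$ isolates exactly the three quantities of interest.
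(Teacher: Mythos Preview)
Your proof is correct and follows essentially the same approach as the paper: compute $\langle R(e_{1},e_{3})e_{3},e_{1}\rangle$ two ways---once from the Gauss equation and Lemma~\ref{L:4.1}, once directly from the connection formulae of Lemma~\ref{L:4.2}---and then substitute $\nu=\mu'/(2\mu)$ to obtain the ODE. Your write-up simply supplies more of the intermediate computations and the motivation for choosing the plane $e_{1}\wedge e_{3}$.
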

\begin{proof} Under the hypothesis, it follows from Gauss' equation and Lemma \ref{L:4.1}  that
$ \<R(e_{1},e_{3})e_{3},e_{1}\>=c+\mu^{2}.$
On the other hand, the definition of curvature tensor and  Lemma \ref{L:4.2}  imply that
$\<R(e_{1},e_{3})e_{3},e_{1}\>=-\nu^{2}$. Thus 
$c=-\nu^{2}-\mu^{2}<0$. By combining this with the definition of $\nu$, we obtain the lemma.
\end{proof}

 \section{More lemmas}
 
Next, we consider the case $a,\mu\ne 0$ and $b=0$. 

\begin{lem}\label{L:5.1} Let $M$ be a Lagrangian submanifold of $\tilde M^5(4c)$ whose second fundamental form satisfies  \e{4.1} with $a,\mu \ne 0$ and $b=0$. Then we have 
\begin{equation}\begin{aligned}\label{5.1}  
&\nabla_{e_{1}}e_{1}=\frac{e_{2}a}{3a} e_{2}+\frac{e_{3}a}{a}e_{3}+\frac{e_{4}a}{3a} e_{4}-\nu e_{5},
\\& \nabla_{e_{1}}e_{2}=-\frac{e_{2}a}{3a}  e_{1} -3 \o^{2}_{1}(e_{3})e_{3}-3 \o^{2}_{1}(e_{4})e_{4},
\\& \nabla_{e_{1}}e_{3}=-\frac{e_{3}a}{a}  e_{1} +3 \o^{2}_{1}(e_{3})e_{2}+ \o^{4}_{3}(e_{1})e_{4},
\\& \nabla_{e_{1}}e_{4}=-\frac{e_{4}a}{a}  e_{1} +3 \o^{2}_{1}(e_{4})e_{2}- \o^{4}_{3}(e_{1})e_{3},
\\& \nabla_{e_{2}}e_{1}=-\frac{e_{1}a}{3a}  e_{2} +3 \o^{2}_{1}(e_{3})e_{3}+ \o^{4}_{1}(e_{2})e_{4},
\\&\nabla_{e_{2}}e_{2}=\frac{e_{1}a}{3a} e_{1}+\frac{e_{3}a}{a}e_{3}+\frac{e_{4}a}{a} e_{4}-\nu e_{5},
\\& \nabla_{e_{2}}e_{3}=-3 \o^{2}_{1}(e_{3})e_{1}-\frac{e_{3}a}{a}  e_{2} + \o^{4}_{3}(e_{2})e_{4},\\
& \nabla_{e_{2}}e_{4}=- \o^{4}_{1}(e_{2})e_{1}-\frac{e_{4}a}{a}  e_{2} - \o^{4}_{3}(e_{2})e_{3},
\\& \nabla_{e_{3}}e_{1}= \o^{2}_{1}(e_{3})e_{2},\;\; \nabla_{e_{3}}e_{2}=-\o^{2}_{1}(e_{3})e_{1},
\\& \nabla_{e_{3}}e_{3}= \o^{4}_{3}(e_{3})e_{4}-\nu   e_{5},
\; \nabla_{e_{3}}e_{4}=- \o^{4}_{3}(e_{3})e_{3},\\& \nabla_{e_{4}}e_{1}= \o^{2}_{1}(e_{4})e_{2},\; \nabla_{e_{4}}e_{2}=-\o^{2}_{1}(e_{4})e_{1},
\; \\& \nabla_{e_{4}}e_{3}=  \o^{4}_{3}(e_{4})e_{4},\;  \nabla_{e_{4}}e_{4}=-\o^{4}_{3}(e_{4})e_{3}-\nu  e_{5},
\; \\&\nabla_{e_{5}}e_{3}=\o^{4}_{3}(e_{5}) e_{4},\;   \nabla_{e_{5}}e_{4}=-\o^{4}_{3}(e_{5})e_{5},
\; \\&  \nabla_{e_{i}}e_{5}=\nu e_{i},\;\; i\in \Delta,\;  
\nabla_{e_{k}}e_{j}=0,\;\; otherwise.\end{aligned}\end{equation}
with $\nu=\frac{1}{2}e_{5}(\ln \mu)=-e_{5}(\ln a).$ Moreover, we have 
\begin{align}\label{5.2}e_{j}\mu=0,\;\; j\in \Delta=\{1,2,3,4\}.\end{align}
\end{lem}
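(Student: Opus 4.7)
The plan is to derive both (5.1) and (5.2) from Codazzi's equation
\[
(\nabla_X h)(Y,Z)=(\nabla_Y h)(X,Z),\qquad (\nabla_X h)(Y,Z)=D_X h(Y,Z)-h(\nabla_XY,Z)-h(Y,\nabla_XZ),
\]
by exploiting the special shape of $h$ in (4.1) with $b=0$. Writing $\nabla_{e_k}e_i=\sum_j \omega_i^j(e_k)\,e_j$ as in (4.2) and using $D_X(Je_i)=J\nabla_Xe_i$ from (2.3), each Codazzi identity becomes a vector identity in the basis $\{Je_1,\ldots,Je_5\}$; matching coefficients turns the whole lemma into a linear system for the one-forms $\omega_i^j$ and the derivatives $e_k a$, $e_k\mu$.

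First I would process the triples involving $e_5$. The $Je_5$-component of $(\nabla_{e_i}h)(e_5,e_5)=(\nabla_{e_5}h)(e_i,e_5)$ for $i\in\Delta$ forces $e_i\mu=0$, which is (5.2). The remaining components in the triples $(e_i,e_5,e_i)$ and $(e_i,e_5,e_j)$, combined with the skew-symmetry $\omega_i^j=-\omega_j^i$, make the off-diagonal forms $\omega_5^i(e_j)$ (for $i\neq j\in\Delta$) vanish and yield a common value $\nu:=-\omega_5^i(e_i)$; simultaneously they produce $e_5\mu=2\nu\mu$ and $e_5 a=-\nu a$, establishing $\nabla_{e_i}e_5=\nu e_i$, $\nabla_{e_5}e_5=0$, and the formula $\nu=\tfrac12 e_5(\ln\mu)=-e_5(\ln a)$. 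Codazzi on $(e_5,e_5,e_3)$ and $(e_5,e_5,e_4)$ then expresses $\nabla_{e_5}e_3,\nabla_{e_5}e_4$ solely through the surviving form $\omega_3^4(e_5)$.

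Next I would exhaust the triples lying in $\Delta$. The identities $(\nabla_{e_1}h)(e_1,e_2)=(\nabla_{e_2}h)(e_1,e_1)$ and $(\nabla_{e_1}h)(e_2,e_2)=(\nabla_{e_2}h)(e_1,e_2)$ relate $e_1a,e_2a$ to the intra-$T_1$ connection and produce the denominator $3a$ (which comes from comparing the $+a$ in the $Je_1$-component of $h(e_1,e_1)$ with the $-a$ in the $Je_2$-component of $h(e_1,e_2)$). Similarly the triples $(e_1,e_3,e_3)$ and $(e_1,e_4,e_4)$, simplified drastically by $h(e_3,e_3)=h(e_4,e_4)=\mu Je_5$ and $h(e_3,e_4)=0$, give the $\tfrac{e_3a}{a}$, $\tfrac{e_4a}{3a}$ terms appearing in $\nabla_{e_1}e_1$ and $\nabla_{e_2}e_2$. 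The cross-triples $(\nabla_{e_1}h)(e_2,e_3)=(\nabla_{e_2}h)(e_1,e_3)$ and their $e_4$-analogues relate the off-diagonal forms $\omega_1^2(e_3),\omega_1^2(e_4),\omega_1^4(e_2),\omega_3^4(e_1),\omega_3^4(e_2)$ among themselves, producing the factor of $3$ that multiplies $\omega_1^2(e_3),\omega_1^2(e_4)$ in $\nabla_{e_1}e_2,\nabla_{e_2}e_1$. Finally, $(\nabla_{e_3}h)(e_i,e_j)=(\nabla_{e_i}h)(e_3,e_j)$ and its $3\leftrightarrow 4$ swap confirm that $\nabla_{e_3}e_\alpha,\nabla_{e_4}e_\alpha$ are determined purely by the one-forms already listed, while $\omega_3^4(e_3),\omega_3^4(e_4)$ survive as genuine unknowns.

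The main obstacle is bookkeeping, not conceptual depth. When $b\neq 0$ in Lemma~4.2 the analogous Codazzi equations forced most off-diagonal connection forms to vanish outright, but setting $b=0$ removes that rigidity: the components $\omega_1^2(e_3),\omega_1^2(e_4),\omega_1^4(e_2),\omega_3^4(e_1),\omega_3^4(e_2),\omega_3^4(e_5),\omega_3^4(e_3),\omega_3^4(e_4)$ all persist as free parameters constrained only by mutually consistent relations. One has to solve this enlarged but still mildly overdetermined linear system with careful attention to signs and the $\omega_i^j=-\omega_j^i$ symmetry, to extract precisely the coefficients in (5.1). Once every Codazzi identity has been exploited, both (5.1) and (5.2) follow.
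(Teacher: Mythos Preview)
Your proposal is correct and follows exactly the route indicated in the paper: the paper's own proof consists of the single sentence ``Follows from Codazzi's equations via Lemma \ref{4.1} and \eqref{4.2},'' and what you have written is a detailed expansion of precisely that computation. Your outline of which Codazzi triples yield which connection forms, and your observation that the vanishing of $b$ leaves several $\omega_i^j$ as surviving free parameters (in contrast to Lemma~\ref{L:4.2}), accurately reflect the bookkeeping involved.
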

\begin{proof} Follows from Codazzi's equations via Lemma \ref{4.1} and \e{4.2}.
\end{proof}

\begin{lem}\label{L:5.2} Under the hypothesis of Lemma \ref{L:5.1}, we have

\vskip .04in
\begin{itemize}
\item[{\rm (i)}] $T_{0}$ is a totally geodesic distribution;

\item[{\rm (ii)}]  $T_{3}$ is a spherical  distribution, 
\end{itemize}
 where
$T_{0}={\rm Span}\{e_{5}\}$ and $T_{3}={\rm Span}\{e_{1},e_{2},e_{3},e_{4}\}.$
\end{lem}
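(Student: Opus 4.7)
The plan is to extract both assertions from the explicit covariant-derivative formulas \eqref{5.1} by tracking the $e_5$-components. This mirrors the strategy used in Lemma~\ref{L:4.3}, only now with the coarser splitting $TM = T_0 \oplus T_3$.

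Part (i) is immediate: $T_0$ has rank one, hence is integrable, and the ``otherwise'' clause of \eqref{5.1} gives $\nabla_{e_5} e_5 = 0$, so the integral curves of $e_5$ are geodesics of $M$, meaning the leaves of $T_0$ are totally geodesic.

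For part (ii), I would first read off from \eqref{5.1} that whenever $i, j \in \Delta$ with $i \ne j$, the covariant derivative $\nabla_{e_i} e_j$ has no $e_5$-component. Since $[e_i, e_j] = \nabla_{e_i} e_j - \nabla_{e_j} e_i$, the bracket lies in $T_3$, so Frobenius yields integrability of $T_3$. The second fundamental form $h_3$ of a leaf $\mathcal{L}_3$ of $T_3$ in $M$ is the $T_3^\perp$-component of $\nabla$ restricted to $T_3$; inspecting the diagonal entries of \eqref{5.1} yields $h_3(e_i, e_j) = -\nu \delta_{ij} e_5$ for $i, j \in \Delta$, so $\mathcal{L}_3$ is totally umbilical in $M$ with mean curvature vector $H_3 = -\nu e_5$.

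The remaining step, and the only genuine obstacle, is to show that $H_3$ is parallel with respect to the induced normal connection $D^3$ on $\mathcal{L}_3 \subset M$. Writing
$$D^3_{e_i} H_3 = -(e_i \nu)\, e_5 - \nu\, (\nabla_{e_i} e_5)^{T_3^\perp},$$
and observing from \eqref{5.1} that $\nabla_{e_i} e_5 = \nu e_i \in T_3$ for $i \in \Delta$, the task reduces to proving $e_i \nu = 0$ for every $i \in \Delta$. Since $\nu = \tfrac12 e_5(\ln \mu)$ and $e_i \mu = 0$ by \eqref{5.2}, it suffices to verify $e_i(e_5 \mu) = 0$. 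I would carry this out by computing the commutator $[e_i, e_5] = \nabla_{e_i} e_5 - \nabla_{e_5} e_i$: from \eqref{5.1}, both $\nabla_{e_i} e_5$ and each $\nabla_{e_5} e_j$ with $j \in \Delta$ lie in $T_3$, so $[e_i, e_5]$ is itself a section of $T_3$. Hence $[e_i, e_5]\mu$ is a linear combination of $\{e_j \mu\}_{j \in \Delta}$, each of which vanishes by \eqref{5.2}, yielding $e_i(e_5 \mu) = e_5(e_i \mu) + [e_i, e_5]\mu = 0$. This gives $e_i \nu = 0$, hence $D^3 H_3 = 0$, completing the verification that $T_3$ is spherical.
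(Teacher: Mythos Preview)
Your proof is correct and follows essentially the same route as the paper's: both arguments read off (i), integrability, and total umbilicity directly from \eqref{5.1}, and both establish parallelism of the mean curvature vector by showing $e_i\nu=0$ via the commutator identity $[e_i,e_5]\mu=0$ together with \eqref{5.2}. Your write-up is simply more explicit about the normal-connection step than the paper's compressed version.
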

\begin{proof} Clearly, $T_{0}$ is integrable. Moreover, since $\nabla_{e_{5}}e_{5}=0$ by Lemma \ref{L:5.1}, integral curves of $e_{5}$ are geodesics in $M^{5}$. Thus  statement (i) follows.
To prove statement (ii), we observe that the  integrability of $T_{3}$ follows  from \e{5.1}. Also,  \e{5.1} implies that the second fundamental form $\hat h$ of a leaf ${\mathcal L}$ of $T_{3}$ in $M^{5}$ is given by
$ \hat h(X,Y)=-\nu \hat g(X,Y)e_{5}$ for $ X,Y\in T\mathcal L,$ where $\hat g$ is the metric of $\mathcal L$. 
 Since $[e_{j},e_{5}]\mu =0$ by  \e{5.1} and $e_{j}\mu=0$,  for  $j\in \Delta$, we find 
$e_{i}e_{5}\mu-e_{5}e_{i}\mu=2 e_{1}\nu=0.$ Therefore $T_{3}$ is a spherical distribution.
\end{proof}

\begin{lem}\label{L:5.3} Under the hypothesis of Lemma \ref{L:5.1},  $M$ is locally a warped product $I\times_{\rho(t)} N^{4}$, where $t$ is function such that $e_{5}=\frac{\partial}{\partial t}$ and $\rho$ is a positive function in $t$  and
$N^{4}$ is a Riemannian 4-manifold. 
\end{lem}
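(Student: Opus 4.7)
The plan is to apply Hiepko's decomposition theorem \cite{H} (see also \cite[Theorem 4.4, p.~90]{book}) to the orthogonal splitting $TM = T_0 \oplus T_3$ furnished by Lemma \ref{L:5.2}, in exact parallel to the argument used for Lemma \ref{L:4.4}, but now with a single spherical factor rather than two.

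First, I would observe that the hypotheses of Hiepko's theorem are met: $T_0$ is a totally geodesic distribution (part (i) of Lemma \ref{L:5.2}), and $T_3$ is a spherical distribution (part (ii) of Lemma \ref{L:5.2}), and the two are complementary and orthogonal by construction. Hiepko's theorem then yields that, in a neighbourhood of any point, $M$ is isometric to a warped product of the form $I \times_\rho N^4$, where $I$ is an open interval carrying the integral curve of $T_0$ and $N^4$ is a leaf of the (integrable) distribution $T_3$, with warping function $\rho$ depending only on the base parameter.

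Next, I would identify the parameter $t$: since Lemma \ref{L:5.1} gives $\nabla_{e_5}e_5 = 0$, the integral curves of $e_5$ are unit-speed geodesics of $M$, so there is a local function $t$ on $M$, constant on leaves of $T_3$, with $e_5 = \partial/\partial t$. This $t$ is the natural arc-length parameter on $I$. For the warping function, the relations $\nabla_{e_i}e_5 = \nu e_i$ ($i \in \Delta$) identify $-\nu e_5$ with the mean curvature vector of the leaves of $T_3$; since $\nu = -e_5(\ln a) = \tfrac{1}{2}e_5(\ln \mu)$ and, as shown within the proof of Lemma \ref{L:5.2}, $e_i \nu = 0$ for $i \in \Delta$, the function $\nu$ descends to $I$. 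The warped-product structure then forces $\rho'(t)/\rho(t) = \nu(t)$, so that $\rho(t) = \rho(0)\exp\!\bigl(\int_0^t \nu(s)\,ds\bigr) > 0$, as required.

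There is essentially no obstacle: all the geometric content (total geodesy of $T_0$, integrability and umbilicity of $T_3$, parallelism of the mean curvature vector of leaves of $T_3$) has already been extracted in Lemmas \ref{L:5.1} and \ref{L:5.2}, and Lemma \ref{L:5.3} is simply the structural consequence via Hiepko's theorem. The only delicate point worth verifying carefully is that the mean curvature one-form of the $T_3$-leaves is closed (so that $\rho$ exists as a bona fide function of $t$), which follows immediately from $\nu$ being a function of $t$ alone.
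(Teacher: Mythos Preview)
Your proposal is correct and follows exactly the same approach as the paper: the paper's own proof is the single line ``Follows from Lemma \ref{L:5.2} and Hiepko's theorem.'' Your additional remarks on identifying $t$ and $\rho$ are accurate elaborations of what Hiepko's theorem produces in this setting.
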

\begin{proof} Follows from Lemma \ref{L:5.2} and Hiepko's theorem.\end{proof}

It follows from \e{5.2} and the definition of $\nu$ that $\mu=\mu(t)$ and $\nu=\nu(t)$.

\begin{lem}\label{L:5.4} Under the hypothesis of Lemma \ref{L:5.1}, we have
 \begin{align} \label{5.3} &\frac{d\nu}{dt}=-3\mu^{2}-\nu^{2}-c,\;\; \frac{d\mu}{dt}=2\mu\nu.\end{align}
\end{lem}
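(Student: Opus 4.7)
My plan is to derive both ODEs from information already assembled in the excerpt: the connection coefficients of Lemma \ref{L:5.1}, the identification $e_{5}=\partial/\partial t$ from Lemma \ref{L:5.3}, and the Gauss equation applied to the explicit form of $h$ given by \e{4.1}.

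First, I would settle $d\mu/dt=2\mu\nu$ at once: by Lemma \ref{L:5.3}, $e_{5}=\partial_{t}$, so $e_{5}(\mu)=\mu'(t)$, and the defining relation $\nu=\tfrac{1}{2}e_{5}(\ln\mu)$ recorded in Lemma \ref{L:5.1} immediately rearranges to this. Next, for $d\nu/dt$, my approach is to compute $\langle R(e_{1},e_{5})e_{5},e_{1}\rangle$ in two different ways and equate them. Intrinsically, reading off from Lemma \ref{L:5.1} the entries $\nabla_{e_{5}}e_{5}=0$, $\nabla_{e_{1}}e_{5}=\nu e_{1}$, and $\nabla_{e_{5}}e_{1}=0$ (this last from the ``otherwise $=0$'' clause), one finds $[e_{1},e_{5}]=\nu e_{1}$ and a brief expansion
\[
R(e_{1},e_{5})e_{5}=\nabla_{e_{1}}\nabla_{e_{5}}e_{5}-\nabla_{e_{5}}\nabla_{e_{1}}e_{5}-\nabla_{[e_{1},e_{5}]}e_{5}
\]
produces a quantity built from $e_{5}(\nu)=\nu'(t)$ and $\nu^{2}$. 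Extrinsically, Gauss' equation \eqref{Gauss} together with $h(e_{5},e_{5})=4\mu Je_{5}$ and $h(e_{1},e_{5})=\mu Je_{1}$ from \e{4.1} expresses the same curvature in terms of $\mu^{2}$ and $c$. Equating the two yields the required identity $d\nu/dt=-3\mu^{2}-\nu^{2}-c$.

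There is no genuine obstacle here --- every needed ingredient has already been prepared. The only care required is bookkeeping: one must invoke the total symmetry of $\langle h(\cdot,\cdot),J\cdot\rangle$ on a Lagrangian submanifold recorded in \e{2.4}, so that $\langle A_{h(e_{5},e_{5})}e_{1},e_{1}\rangle$ reduces to the normal inner product $\langle h(e_{1},e_{1}),h(e_{5},e_{5})\rangle=4\mu^{2}$ and $\langle A_{h(e_{1},e_{5})}e_{5},e_{1}\rangle$ reduces to $\|h(e_{1},e_{5})\|^{2}=\mu^{2}$, giving $3\mu^{2}+c$ on the extrinsic side. An entirely parallel computation using $R(e_{3},e_{5})e_{5}$ provides a cross-check since the connection entries involving $e_{3},e_{5}$ are structurally identical to those for $e_{1},e_{5}$.
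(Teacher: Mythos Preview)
Your proposal is correct and matches the paper's own proof essentially line for line: the paper also derives the second equation directly from the definition of $\nu$ in Lemma~\ref{L:5.1}, and obtains the first by computing $\langle R(e_{1},e_{5})e_{5},e_{1}\rangle$ once via Gauss' equation (getting $3\mu^{2}+c$) and once via the connection coefficients \eqref{5.1} (getting $-e_{5}\nu-\nu^{2}$). The extra bookkeeping you spell out for the shape-operator terms and the optional cross-check with $R(e_{3},e_{5})e_{5}$ are fine elaborations but not additional ideas.
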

\begin{proof}  From Gauss' equation and \e{5.1} we find $\<R(e_{1},e_{5})e_{5},e_{1}\>=3\mu^{2}+c$. On the other hand,  \e{5.1} of Lemma \ref{L:5.1} yields $\<R(e_{1},e_{5})e_{5},e_{1}\>=-e_{5}\nu-\nu^{2}$. Thus we find the first equation of \e{5.3}. The second one follows immediately from  the definition of $\nu$ given in Lemma \ref{L:5.1}.
\end{proof}

\section{Improved $\delta(2,2)$-ideal Lagrangian submanifolds of ${\bf C}^{5}$}

\begin{thm}\label{T:6.1} Let $M$ be an improved $\delta(2,2)$-ideal Lagrangian submanifold in ${\bf C}^{5}$. Then it is one of the following Lagrangian submanifolds:

\begin{itemize}
\item[{\rm (a)}]  a $\delta(2,2)$-ideal Lagrangian minimal submanifold;
 
 \item[{\rm (b)}] an $H$-umbilical Lagrangian submanifold of ratio 4; 
 
 \item[{\rm (c)}]  a Lagrangian submanifold defined by
 \begin{equation}\label{6.1} L(\mu, u_{2},\ldots,u_{n})=\frac{e^{\frac{4 }{3} \ii\tan^{-1} \! \sqrt{\mu^{3}/(c^{2}-\mu^{3})}}}{\sqrt{{c^{2}}{\mu^{-1}}-\mu^{2}}+ \ii\mu}\phi( u_{2},\ldots,u_{n}),  \end{equation}
where $c$ is a positive real number and $\phi( u_{2},\ldots,u_{n})$ is a horizontal lift of a non-totally geodesic $\delta(2)$-ideal  Lagrangian minimal  immersion  in $CP^4(4)$.
  \end{itemize} \end{thm}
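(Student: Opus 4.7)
The argument proceeds by a case analysis on the vanishing of the functions $a,b,\mu$ appearing in the normal form (4.1) furnished by Lemma~\ref{L:4.1}. If $\mu\equiv 0$ then (4.1) is tracefree, so $M$ is minimal and $\delta(2,2)$-ideal, giving case (a). If $a\equiv b\equiv 0$ and $\mu\not\equiv 0$, then (4.1) reduces to the $H$-umbilical shape (3.1) with $\varphi/\mu=4$, placing $M$ in case (b). Otherwise we may assume $\mu\ne 0$ and, after possibly interchanging the pairs $\{e_1,e_2\}$ and $\{e_3,e_4\}$, that $a\ne 0$. Lemma~\ref{L:4.5} forces $c=-\nu^2-\mu^2<0$ whenever $b$ is also non-zero on an open set, so in ${\bf C}^5$ (where $c=0$) we must have $b\equiv 0$, which brings us to the setting of Lemmas~\ref{L:5.1}--\ref{L:5.4}.

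Under these hypotheses Lemma~\ref{L:5.3} provides a local warped product $M=I\times_{\rho(t)}N^4$ with $e_5=\partial_t$, and Lemma~\ref{L:5.4} with $c=0$ yields
\[\mu'=2\mu\nu,\qquad \nu'=-3\mu^2-\nu^2.\]
Eliminating $\nu$ gives $2\mu\mu''-(\mu')^2+12\mu^4=0$; the substitution $w=(\mu')^2/\mu$ converts this into $w'=-12\mu^2\mu'$, which integrates to the first integral $(\mu')^2=4\mu(c_0^2-\mu^3)$ for some positive constant $c_0$, which plays the role of the symbol $c$ appearing in (6.1). In particular $\nu^2=c_0^2/\mu-\mu^2$ matches the radicand in (6.1).

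Next I would convert this abstract warped product into the explicit formula (6.1). Since $\tn_{e_5}e_5=4\mu Je_5$ by (4.1), the immersion $L(t,p)$ obeys the Legendre-type complex ODE $L_{tt}=4\mu\,\ii L_t$ along the $t$-lines; integrating this with the aid of $(\mu')^2=4\mu(c_0^2-\mu^3)$ and matching the phase to the horizontality relation $\tn_{e_i}e_5=\nu e_i$ for $i\in\Delta$ reconstructs the complex prefactor $f(t)=e^{\frac{4}{3}\ii\tan^{-1}\!\sqrt{\mu^3/(c_0^2-\mu^3)}}/(\sqrt{c_0^2/\mu-\mu^2}+\ii\mu)$ of (6.1), whose modulus equals $\rho(t)=\sqrt{\mu}/c_0$. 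Factorization $L(t,p)=f(t)\phi(p)$ is then forced by the warped-product structure equations (5.1), so that $\phi\colon N^4\to S^9(1)\subset{\bf C}^5$; since $L$ is Lagrangian and $f\ne 0$, $\phi$ is horizontal with respect to the Hopf fibration $\pi\colon S^9(1)\to CP^4(4)$ and $\bar\phi=\pi\circ\phi$ is a Lagrangian immersion by the correspondence recalled in \S2.3.

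Finally, subtracting the warping contribution $\nu g(\cdot,\cdot)Je_5$ from the $T_3$-block of (4.1) (with $b=0$) leaves a tracefree form supported entirely in $\{e_1,e_2\}$, namely $\tilde h(e_1,e_1)=aJe_1$, $\tilde h(e_1,e_2)=-aJe_2$, $\tilde h(e_2,e_2)=-aJe_1$, with the $\{e_3,e_4\}$-block vanishing identically; this matches precisely the equality case (1.3) of Theorem~C for $(k,n_1,n)=(1,2,4)$ with $\lambda=0$, so $\bar\phi$ is minimal and $\delta(2)$-ideal, and $a\ne 0$ excludes total geodesy. The main obstacle is the explicit integration producing $f(t)$ in closed form: it is here that the special algebraic structure of the ODE $(\mu')^2=4\mu(c_0^2-\mu^3)$ and the phase-matching coming from $\tn_{e_i}e_5=\nu e_i$ must be combined carefully. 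The remainder of the argument is bookkeeping: verifying the horizontality of $\phi$ and reading the $\delta(2)$-ideal equality off the tangential components of (4.1) as above.
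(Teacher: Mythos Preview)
Your outline matches the paper's proof: the same case split on $\mu,a,b$, the same use of Lemma~\ref{L:4.5} to force $b=0$ in ${\bf C}^5$, the same warped-product reduction and ODE system from Lemmas~\ref{L:5.1}--\ref{L:5.4}. For the integration you flag as the main obstacle, the paper's device is cleaner than integrating $L_{tt}=4\ii\mu L_t$ and then arguing away an additive $C(u)$: it sets $\phi:=e^{\ii\varphi}e_5$ with $\varphi'=-4\mu$, checks $\tilde\nabla_{e_5}\phi=0$, and then verifies directly that $\tilde\nabla_{e_\alpha}\bigl(L-e_5/(\nu+\ii\mu)\bigr)=0$ for all $\alpha$, which yields $L=e^{-\ii\varphi}\phi/(\nu+\ii\mu)$ at once. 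Two small corrections: the relation you invoke should read $\tilde\nabla_{e_i}e_5=(\nu+\ii\mu)e_i$ (the imaginary part coming from $h(e_i,e_5)=\mu Je_i$ is exactly what makes this computation close up), and in your final paragraph the diagonal term to strip from the $T_3$-block is $\mu\,g(\cdot,\cdot)Je_5$, not $\nu\,g(\cdot,\cdot)Je_5$.
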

\begin{proof}  Assume that $M$ is an improved $\delta(2,2)$-ideal Lagrangian submanifold in ${\bf C}^{5}$. Then there exists an orthonormal frame $\{e_{1},\ldots,e_{5}\}$ such that \e{4.1} holds. If $\mu=0$, then $M$ is a minimal $\delta(2,2)$-ideal Lagrangian submanifold. Thus, we obtain case (a). If $\mu\ne 0$ and $a=b=0$,  we obtain case (b). 

Now, let us assume $a,\mu\ne 0$. Then Lemma \ref{L:4.5} implies $b=0$. So, by Lemmas \ref{L:5.1} we have \e{5.1} and $e_{j}\mu=0$, $j\in \Delta$. Further, by Lemma  \ref{L:5.3}, $M$ is locally a warped product $I\times_{\rho(t)} N^{4}$ with $e_{5}=\partial_{t}$. Moreover,  \ref{L:4.1} shows that the second fundamental form satisfies
 \begin{equation}\begin{aligned}\label{6.2} &h(e_{1},e_{1})=a Je_{1}+\mu Je_{5},\; h(e_{1},e_{2})=-a Je_{2},\;\\& h(e_{2},e_{2})=-a Je_{1}+\mu Je_{5},\;\\& h(e_{3},e_{3})=h(e_{4},e_{4})=\mu Je_{5},\;\; 
\\& h(e_{i},e_{5})= \mu Je_{i},\; i\in \Delta,  \\&h(e_{5},e_{5})=4\mu Je_{5},
\;  h(e_{i},e_{j})=0,\;\; otherwise.\end{aligned}\end{equation}

From Lemma \ref{L:5.4} we have the following differential system:
\begin{align} \label{6.3} &\frac{d\nu}{dt}=-3\mu^{2}-\nu^{2},\;\;  \frac{d\mu}{dt}=2\mu\nu.\end{align}
Let $\varphi(t)$ be a function satisfying 
$\dfrac{d\varphi}{dt}=-4\mu.$
Consider the map \begin{align}\label{6.4} \phi=e^{\ii \varphi}e_{5}.\end{align}
Then $\<\phi,\phi\>=1$. 
It follows from $\nabla_{e_{5}}e_{5}=0$,  $\frac{d\varphi}{dt}=-4\mu$ and \e{6.2} that 
$\tilde\nabla_{e_{5}}\phi=0$,
where $\tilde \nabla$ is the Levi-Civita connection of ${\bf C}^{5}$. Thus $\phi$ is independent of $t$.

Let $L$ denote the Lagrangian immersion of $M$ in ${\bf C}^{5}$. Then  \e{6.4} yields
\begin{align}\label{6.5} e_{5}=L_{t}=e^{-\ii \varphi}\phi(u_{1},\ldots,u_{4}),\end{align}
where $u_{1},\ldots,u_{4}$ are local coordinates of $N^{4}$. 
For each $j\in \Delta$, we obtain from $\nabla_{e_{j}}e_{5}=\nu e_{j}$ of Lemma \ref{L:5.1} and the first equation of \e{6.3} that
\begin{align}\label{6.6} \phi_{*}(e_{j})=\tilde\nabla_{e_{j}}\phi= e^{\ii \varphi}\tilde\nabla_{e_{j}}e_{5}=e^{\ii\varphi}(\nu+\ii \mu)e_{j}.\end{align} Thus 
\begin{equation}\label{6.7} \tilde\nabla_{e_{j}}(\phi_{*}(e_{i}))=e^{\ii\varphi}(\nu+\ii \mu)\tilde\nabla_{e_{j}}e_{i}.\end{equation}

In view of $\nabla_{e_{j}}e_{5}=\nu e_{j}$ and \e{6.2}, we may put
\begin{align}\label{6.8} \tilde\nabla_{e_{i}}e_{j}=\Big(\sum_{k=1}^{4}\Gamma^{k}_{ij}+\ii h^{k}_{ij} \Big)e_{k}-(\nu -\ii \mu)\delta_{ij} e_{5},\; \; i,j\in \Delta,
\end{align} for some functions $\Gamma^{k}_{ij}$.
Now, it follows  from \e{6.4}, \e{6.6}, \e{6.7}, and \e{6.8} that
\begin{equation}\begin{aligned}\label{6.9} \tilde\nabla_{e_{j}}(\phi_{*}(e_{i}))&=\sum_{\gamma=2}^{n} \(\Gamma^{k}_{ij}+\ii h^{k}_{ij}\)\phi_{*}(e_{k})- \(\mu^{2}+\nu^{2}\) \delta_{ij}\phi
\\&=\sum_{\gamma=2}^{n} \(\Gamma^{k}_{ij}+\ii h^{k}_{ij}\)\phi_{*}(e_{k})- \<\phi_{*}(e_{i}),\phi_{*}(e_{j})\>\phi.\end{aligned}\end{equation}

Since $M$ is a Lagrangian submanifold in ${\bf C}^5$, \e{6.4} and \e{6.6} show that $\ii \phi$ is perpendicular to each tangent space of $M$. Hence  $\phi$ is a horizontal immersion in the unit hypersphere $S^{9}(1)\subset {\bf C}^{5}$. Moreover, it follows from \e{6.9} that the second fundamental form of $\phi$ is the original second fundamental form of $M$ respect to to the second factor $N^{4}$ of the warped product $I\times_{\rho(t)}N^{4}$. Hence, $\phi$ is a minimal horizontal immersion in $S^{9}(1)$. Therefore, $\phi$ is a horizontal lift of a minimal Lagrangian immersion in $CP^{4}(4)$. Now, it follows from \e{6.2} that $\phi$ is a horizontal lift of a $\delta(2)$-ideal minimal Lagrangian submanifold of $CP^{4}(4)$. 

By direct computation we find
\begin{equation}\label{6.10}\tilde\nabla_{e_{\a}}\! \(L-\frac{e_{5} }{\nu+ \ii\mu} \)= 0,\;\; \a=1,\ldots,5.\end{equation} 
Thus, by \e{6.4}, up to translations the Lagrangian immersion $L$ is 
\begin{equation}\begin{aligned}\label{6.11} L=\frac{e^{-\ii \varphi}}{\nu+ \ii\mu}\phi(u_{1},\ldots,u_{4}) , \end{aligned}\end{equation}
where $\phi$ is a horizontal minimal immersion in $S^{9}(1)$ and $\nu,\varphi,\mu$ satisfy 
\begin{align}\label{6.12}\frac{d\nu}{dt}=-3\mu^{2}-\nu^{2},\;\;\frac{d\varphi}{dt}=-4\mu,\;\;   \frac{d\mu}{dt}=2\mu\nu.\end{align}
From \e{6.12} we find
\begin{align}\label{6.13} \frac{d\nu}{d\mu}+\frac{\nu}{2\mu}=-\frac{3\mu}{2\nu}.\end{align}
After solving \e{6.13} we get $\nu=\pm \sqrt{{c^{2}}{\mu^{-1}}-\mu^{2}}$
for some real number $c>0$.  Replacing $e_{5}$ by $-e_{5}$ if necessary, we have
\begin{align}\label{6.14}\nu= \sqrt{{c^{2}}{\mu^{-1}}-\mu^{2}}.\end{align}
It follows from \e{6.12} an \e{6.14} that 
$\varphi'(\mu)={-2}/{\sqrt{{c^{2}}{\mu^{-1}}-\mu^{2}}}$. By solving the last equation we find
$\varphi=-\frac{4 }{3} \ii\tan^{-1} \! \sqrt{\mu^{3}/(c^{2}-\mu^{3}})+c_{0}$ for some constant $c_{0}$. Therefore, we have the theorem after applying a suitable translation in $\mu$.
 \end{proof}

\begin{rem} Minimal $\delta(2,2)$-ideal Lagrangian submanifolds in complex space forms ${\bf C}^{5}$, $CP^{5}$ and $CH^{5}$ are classified in \cite{CP}. Also $\delta(2)$-ideal minimal Lagrangian submanifolds in $CP^{4}$ and $CH^{4}$ have been classified recently in \cite{sdsv}.
\end{rem}

Let $\gamma(t)$ be a unit speed curve in ${\bf C}^{*}$. We put \begin{align}\label{6.15}\gamma(t)=r(t)e^{i\theta(t)},\;\; \gamma'(t)=e^{i\zeta(t)}.\end{align}

The following result gives $H$-umbilical  submanifolds of ${\bf C}^5$ with ratio 4.

\begin{prop} \label{P:6.3} If $M$ is an $H$-umbilical Lagrangian submanifold of ${\bf C}^5$ of ratio 4, then $M$ is an open part of a complex extensor $\gamma\otimes \iota$ of the unit hypersphere $\iota: S^{4}(1)\subset \Bbb E^5$ via a generating curve $\gamma:I\to {\bf C}^{*}$ whose curvature satisfies $\kappa=4\theta'$.
\end{prop}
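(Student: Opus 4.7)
The plan is to invoke Theorem F (classification of $H$-umbilical Lagrangian submanifolds of ${\bf C}^n$ for $n\ge 3$) and then translate the ratio-$4$ condition into a condition on the generating curve via a direct frame computation. First I would eliminate the flat alternative in Theorem F: the Gauss equation applied to the $H$-umbilical form \e{3.1} in the flat ambient ${\bf C}^5$ gives, for distinct $i,j\in\{1,\ldots,4\}$,
\begin{align*}
K(e_i\wedge e_j)=\mu^2,\qquad K(e_j\wedge e_5)=(\varphi-\mu)\mu=3\mu^2,
\end{align*}
where the last equality uses $\varphi=4\mu$. Non-total-geodesy (built into the definition of $H$-umbilical) forces $\mu\not\equiv 0$, so $M$ is not flat on any open set. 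Theorem F then identifies $M$ with an open part of a complex extensor $\gamma\otimes\iota$ of $\iota\colon S^4(1)\subset\mathbb E^5$, and it remains only to read off the ratio in terms of $\gamma$.

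For the second step I would parametrize the extensor by $L(t,\mathbf y)=\gamma(t)\mathbf y$ with $\gamma=re^{i\theta}$, $\gamma'=e^{i\zeta}$ as in \e{6.15}, and choose the adapted orthonormal frame $e_5=L_t=e^{i\zeta}\mathbf y$, $e_j=e^{i\theta}\mathbf E_j$ for $j=1,\ldots,4$, where $\{\mathbf E_j\}$ is any local orthonormal frame on $S^4(1)$. Two easy derivatives in the flat ${\bf C}^5$-connection give
\begin{align*}
\tn_{e_5}e_5=i\zeta'\,e_5=\kappa\,Je_5,\qquad \tn_{e_5}e_j=i\theta'\,e_j=\theta'\,Je_j,
\end{align*}
each automatically normal to $M$ because $M$ is Lagrangian, so comparison with \e{3.1} already yields $\varphi=\kappa$ and $\mu=\theta'$.

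To close the identification with the full $H$-umbilical model I would verify $h(e_i,e_j)=\mu\delta_{ij}Je_5$ for $i,j\le 4$ using the Gauss formula $\tn^{\mathbb E^5}_{\mathbf E_i}\mathbf E_j=\nabla^{S^4}_{\mathbf E_i}\mathbf E_j-\delta_{ij}\mathbf y$. The main obstacle here is that the naive normal contribution $-e^{i\theta}\mathbf y/r$ is neither tangent to $M$ nor proportional to $Je_5$, so one must decompose $e^{i\theta}\mathbf y$ along the $(e_5,Je_5)$-plane. Exploiting the elementary identity $e^{i(\zeta-\theta)}=r'+ir\theta'$ coming from $\gamma'=e^{i\zeta}$ gives $e^{i\theta}\mathbf y=r'e_5-r\theta'\,Je_5$; after absorbing the $e_5$-component into the tangential term, the residual normal component is exactly $\theta'\delta_{ij}Je_5$, confirming $\mu=\theta'$ on the diagonal and $h(e_i,e_j)=0$ off-diagonal.

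Finally, with $\varphi=\kappa$ and $\mu=\theta'$ established, the ratio-$4$ hypothesis $\varphi=4\mu$ becomes the stated relation $\kappa=4\theta'$, which completes the proof.
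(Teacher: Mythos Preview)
Your proof is correct and follows essentially the same route as the paper: rule out the flat alternative in Theorem~F via the Gauss equation (the paper uses $K(e_1\wedge e_5)=3\mu^2$, you additionally note $K(e_i\wedge e_j)=\mu^2$), then identify $\varphi=\kappa$ and $\mu=\theta'$ for the complex extensor. The only difference is that the paper simply cites \cite{tohoku} for the identifications $\varphi=\zeta'=\kappa$, $\mu=\theta'$, whereas you carry out the adapted-frame computation explicitly; your decomposition $e^{i\theta}\mathbf y=r'e_5-r\theta'\,Je_5$ is the right way to extract the normal component, and the argument is complete.
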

\begin{proof}  If $M$ is an $H$-umbilical Lagrangian submanifold of ${\bf C}^5$ with ratio 4, then the second fundamental form satisfies
\begin{equation} \begin{aligned} \notag&
 h(e_j,e_j)\hskip-.01in =\hskip-.01in \mu
J e_5, \;\; h(e_j, e_5)= \mu J e_j,\; \; j\in \Delta,
\\& h(e_5, e_5)= 4\mu J e_5,\; h(e_j,e_k)= 0,\;\;  1\leq j\ne k\leq 4, \end{aligned}\end{equation}  
for a nonzero function $\mu$. Thus Gauss' equation yields $K(e_1\wedge e_5
)=3\mu^2$. Hence $M$ is non-flat. Therefore, according to Theorem F, $M$ is an open part of a complex extensor of $\iota: S^{n-1}(1)\subset {\mathbb E}^{n}$ via a generating curve $\gamma:I\to {\bf C}^{*}$. 
It follows from \cite{tohoku} that the  functions $\varphi$ and $\mu$ in \e{4.1} are related with the two angle functions $\zeta$ and $\theta$ by $\varphi=\zeta'(t)=\kappa$ and $
\mu=\theta'(t)$. Thus whenever $\gamma$ is a unit speed curve satisfying $\kappa=4\theta'$,  the complex extensor $\gamma \otimes\iota\, $ is an $H$-umbilical Lagrangian submanifold of ratio 4. Conversely, every $H$-umbilical Lagrangian submanifold of ratio 4 in ${\bf C}^{n}$ can be obtained in such way.
\end{proof}

\section{Improved $\delta(2,2)$-ideal Lagrangian submanifolds of $CP^{5}$}

\begin{thm}\label{T:7.1} Let $M$ be an improved $\delta(2,2)$-ideal Lagrangian submanifold in $CP^5(4)$. Then it is one of the following Lagrangian submanifolds:

\begin{itemize}
\item[{\rm (1)}]  a $\delta(2,2)$-ideal Lagrangian minimal submanifold;
 
 \item[{\rm (2)}] an $H$-umbilical Lagrangian submanifold of ratio 4; 
 \item[{\rm (3)}]  a Lagrangian submanifold defined by 
 \begin{align} \label{7.1}&\hskip.1in L(\mu,u_{2},\ldots,u_{4})=\frac{1}{c}\Big(\sqrt{\mu} e^{\ii \theta}\phi,e^{3\ii \theta} (\sqrt{c^{2}-\mu^{3}-\mu}-\ii\mu^{\frac{3}{2}})\Big),\end{align}
 where $c$ is a positive real number, $\phi:N^4\to S^{9}(1)\subset {\bf C}^{5}$  is  a horizontal lift of a non-totally geodesic $\delta(2)$-ideal Lagrangian minimal immersion in $CP^4(4)$, and $\theta(\mu)$ satisfies
 \begin{align} \label{7.2} &\frac{d\theta}{d\mu}=\frac{1}{2\sqrt{c^{2}\mu^{-1}-\mu^{2}-1}}.\end{align}
  \end{itemize} \end{thm}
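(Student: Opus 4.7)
The plan is to adapt the strategy of the proof of Theorem~\ref{T:6.1}, lifting $M$ to a horizontal Legendrian immersion $\psi:M\to S^{11}(1)\subset {\bf C}^{6}$ via the Hopf fibration. First I apply Lemma~\ref{L:4.1} to obtain the canonical frame satisfying \eqref{4.1}; the cases $\mu=0$ and $a=b=0$ immediately produce (1) and (2), respectively. In the remaining case $a,\mu\ne 0$, positivity of the ambient curvature forces $b=0$: otherwise Lemma~\ref{L:4.5} would give $c=-\nu^{2}-\mu^{2}<0$, contradicting the fact that we work in $CP^{5}(4)$. Then Lemmas~\ref{L:5.1}--\ref{L:5.4} apply, yielding a warped-product decomposition $M=I\times_{\rho}N^{4}$ with $e_{5}=\partial_{t}$ and the ODE system $\mu'=2\mu\nu$, $\nu'=-3\mu^{2}-\nu^{2}-1$. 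Multiplying the first by $\nu$ and combining with the second gives $(\mu\nu^{2}+\mu^{3}+\mu)'=0$, so $\mu\nu^{2}=c^{2}-\mu^{3}-\mu$ for some positive constant $c$. Introducing a phase $\theta(t)$ with $\theta'=\mu$, the chain rule together with $\mu'=2\mu\nu$ gives $d\theta/d\mu=1/(2\nu)$, matching \eqref{7.2}.

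Next I pass to the horizontal lift $\psi:M\to S^{11}(1)\subset{\bf C}^{6}$. From \eqref{2.10} the ambient Euclidean connection satisfies $\hat\nabla_{e_{5}}e_{5}=4\ii\mu\,e_{5}-\psi$ and $\hat\nabla_{e_{j}}e_{5}=(\nu+\ii\mu)e_{j}$ for $j\in\Delta$. Setting $A(t):=\sqrt{\mu}\,e^{\ii\theta}/c$ (so that $A_{t}=A(\nu+\ii\mu)$), define
\[
\phi:=\overline{A}\,\psi+\overline{A}(\nu-\ii\mu)\,e_{5}.
\]
A direct calculation using the ODEs for $\mu$ and $\nu$ verifies $\hat\nabla_{e_{5}}\phi=0$, so $\phi$ depends only on the $N^{4}$-coordinates. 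Writing ${\bf C}^{6}={\bf C}^{5}\oplus{\bf C}\eta$ with $\eta:=e_{6}$ and decomposing $\psi=\psi'+B\eta$ with $\psi'\in{\bf C}^{5}$, one checks that the $\eta$-component of $\phi$ is identically zero. Using the first integral in the form $1+\nu^{2}+\mu^{2}=c^{2}/\mu$, one also finds $|\phi|=1$, so $\phi:N^{4}\to S^{9}(1)\subset{\bf C}^{5}$. Following the argument in the proof of Theorem~\ref{T:6.1}, $\phi$ is a horizontal isometric Lagrangian immersion, and the restriction of $h$ to the $N^{4}$-directions in \eqref{4.1} (with $b=0$) forces $\pi\circ\phi$ to be a non-totally geodesic $\delta(2)$-ideal Lagrangian minimal immersion in $CP^{4}(4)$.

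Finally, to extract the explicit formula I write $\psi=A\phi+B\eta$ and determine $B(t)$ from $|A|^{2}+|B|^{2}=1$ together with the Legendrian condition $A\overline{A_{t}}+B\overline{B_{t}}=0$. Direct integration, using $\theta'=\mu$ and $\mu\nu^{2}=c^{2}-\mu^{3}-\mu$, yields $B=e^{3\ii\theta}(\sqrt{c^{2}-\mu^{3}-\mu}-\ii\mu^{3/2})/c$, which combined with $\psi=A\phi+B\eta$ produces \eqref{7.1}. The main technical obstacle is the construction of $\phi$: both the identity $\hat\nabla_{e_{5}}\phi=0$ and the vanishing of the $\eta$-component of $\phi$ depend crucially on the first integral $\mu\nu^{2}=c^{2}-\mu^{3}-\mu$ derived from Gauss' equation, without which no clean time/space decoupling is possible. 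A subsidiary verification is that $\pi\circ\phi$ is genuinely $\delta(2)$-ideal (and not some other $\delta$-ideal configuration), which follows from the trace-free $(e_{1},e_{2})$-block in \eqref{4.1} together with the vanishing of the $(e_{3},e_{4})$-block once $b=0$.
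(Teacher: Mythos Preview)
Your overall strategy coincides with the paper's: reduce to $b=0$ via Lemma~\ref{L:4.5}, invoke Lemmas~\ref{L:5.1}--\ref{L:5.4} for the warped-product structure and the ODE system, build the $t$-independent map $\phi=\bar A\bigl(\psi+(\nu-\ii\mu)e_{5}\bigr)$, and read off \eqref{7.1}. Once one substitutes $\sqrt{1+\mu^{2}+\nu^{2}}=c/\sqrt{\mu}$ your $\phi$ is exactly the paper's, and your first integral and the verification $\hat\nabla_{e_{5}}\phi=0$ are correct.

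There is, however, a genuine gap at the sentence ``one checks that the $\eta$-component of $\phi$ is identically zero'' with $\eta:=e_{6}$. Nothing you have proved singles out a preferred coordinate direction in ${\bf C}^{6}$: a priori $\phi$ only lands in $S^{11}(1)$, and your later decomposition $\psi=A\phi+B\eta$ with $\phi\perp\eta$ (hence $|A|^{2}+|B|^{2}=1$) has no foundation until you exhibit a \emph{constant} unit vector Hermitian-orthogonal to $\phi$. The paper supplies precisely this missing piece by introducing, alongside $\phi$, the companion map
\[
\xi=\frac{e^{-3\ii\theta}\bigl(e_{5}-(\nu+\ii\mu)\psi\bigr)}{\sqrt{1+\mu^{2}+\nu^{2}}}
\]
(the paper writes $L$ for your $\psi$) and checking from the same ODEs that $\tilde\nabla_{e_{\alpha}}\xi=0$ for every $\alpha=1,\dots,5$; thus $\xi$ is a constant unit vector, and one then computes $\langle\phi,\xi\rangle=\langle\phi,\ii\xi\rangle=0$. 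Only after choosing coordinates with $\xi=(0,\dots,0,1)$ does $\phi$ take values in $S^{9}(1)\subset{\bf C}^{5}$, and only then does your ansatz $\psi=A\phi+B\eta$ become legitimate. Add this step and the remainder of your argument goes through.
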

\begin{proof}   Under the hypothesis there is an orthonormal frame $\{e_{1},\ldots,e_{5}\}$ such that \e{4.1} holds. If $\mu=0$, then $M$ is a $\delta(2,2)$-ideal Lagrangian minimal submanifold. Thus we obtain case (1). 
If $\mu\ne 0$ and $a,b=0$, then $M$ is an $H$-umbilical Lagrangian submanifold of ratio 4, which gives case (2).

Next, assume that $a,\mu\ne 0$. Then Lemma \ref{L:4.5} implies $b=0$. So, by Lemmas \ref{L:5.1} we obtain \e{5.1} and \e{5.2}. Also, in this case $M$ is locally a warped product $I\times_{\rho(t)} N^{4}$ with $e_{5}=\partial_{t}$ according to Lemma  \ref{L:5.3}. 
From Lemma  \ref{L:4.1}, we find
 \begin{equation}\begin{aligned}\label{7.3} &h(e_{1},e_{1})=a Je_{1}+\mu Je_{5},\; h(e_{1},e_{2})=-a Je_{2},\;\\& h(e_{2},e_{2})=-a Je_{1}+\mu Je_{5},\\&h(e_{3},e_{3})=h(e_{4},e_{4})=\mu Je_{5},\; h(e_{5},e_{5})=4\mu Je_{5},
\\& h(e_{i},e_{5})= \mu Je_{i},\; i\in \Delta,\;
\;  h(e_{i},e_{j})=0,\; otherwise.\end{aligned}\end{equation}
By Lemma \ref{L:5.4} we have the following ODE system:
\begin{align} \label{7.4} &\frac{d\nu}{dt}=-1-\nu^{2}-3\mu^{2},\;\;\;  \frac{d\mu}{dt}=2\mu\nu.\end{align}
Let $\theta(t)$ be a function on $M$ satisfying 
\begin{align}\label{7.5} \theta'(t)={\mu}.\end{align}

Let  $L$ denote the  horizontal lift in $S^{11}(1)\subset {\bf C}^{6}$ of the Lagrangian immersion of $M$ in $CP^5(4)$ via  Hopf 's fibration. 
Consider the maps:
\begin{align}\label{7.6} \xi=\frac{e^{-3\ii \theta}\(e_{5}- \(\nu+{\ii\mu}\)L\)}{\sqrt{1+{\mu^2}+\nu^2}},\;\; \phi=\frac{e^{-\ii \theta}\(L+ \(\nu-\ii\mu\)e_5\)}{\sqrt{1+\mu^2+\nu^2}}.\end{align}
Then $\<\xi,\xi\>=\<\phi,\phi\>=1$. From $\nabla_{e_{j}}e_{5}=\nu e_{j}$, $j\in \Delta,$ and \e{7.4}, we find $\tilde \nabla_{e_j}\xi=0$. 
Moreover, it follows from Lemma \ref{L:5.1} and \e{7.3} that $\tilde \nabla_{e_{5}}e_{5}=4\ii \mu e_{5}-L$. Thus we also hhve$\tilde \nabla_{e_5}\xi=0$. Hence $\xi$ is a constant unit vector in ${\bf C}^{6}$. 
Similarly, we also have $\tilde \nabla_{e_5}\phi=0$. So $\phi$ is independent of $t$.
 Therefore, by combining \e{7.6} we find
\begin{align}\label{7.8}L=\frac{e^{\ii \theta}\phi-  e^{3\ii \theta}(\nu-\ii\mu)\xi}{\sqrt{1+\mu^2+\nu^2}}.\end{align}
Since $\phi$ is orthogonal to $\xi,\ii\xi$,   after choosing $\xi=(0,\ldots,0,1)\in {\bf C}^{6}$ we obtain 
\begin{align}\label{7.9} L=\frac{1}{\sqrt{1+\mu^2+\nu^2}}\(e^{\ii \theta}\phi, e^{3\ii \theta}(\nu-\ii\mu)\)\end{align}

It follows from \e{7.4} and \e{7.5} that 
\begin{align} \label{7.10} &\frac{d\nu}{d\mu}=-\frac{1+\nu^{2}+3\mu^{2}}{2\mu\nu},\;\;\;  \frac{d\theta}{d\mu}=\frac{1}{2\nu}.\end{align}
Solving the first differential equation in \e{7.10} gives
\begin{align} \label{7.11} &\nu=\pm \sqrt{c^{2}\mu^{-1}-\mu^{2}-1},\;\; c\in {\bf R}^{+}. \end{align}
 By replacing $e_{5}$ by $-e_{5}$ if necessary, we have $\nu=\sqrt{c^{2}\mu^{-1}-\mu^{2}-1}$.
Consequently, 
\begin{align} \label{7.12}L=\frac{1}{c} \(\sqrt{\mu} e^{\ii \theta}\phi, e^{3\ii \theta} (\sqrt{c^{2}-\mu^{3}-\mu}-\ii\mu^{\frac{3}{2}})\),\end{align}

It follows from \e{5.1}, \e{7.3} and the second formula in \e{7.6} that 
\begin{align}\label{7.13}\hat\nabla_{e_j}\phi=\frac{ce^{-\ii \theta}}{\sqrt{\mu}}e_j,\;\; j\in \Delta. \end{align}
Thus after applying \e{6.11} and \e{7.13} we derive that 
\begin{equation}\label{7.14} \hat\nabla_{e_{\b}}\hat\nabla_{e_\a}\phi=\sum_{\gamma=2}^{n} \(\Gamma^{k}_{ij}+\ii h^{k}_{ij}\)\phi_{*}(e_{k})- \<\phi_{*}(e_{i}),\phi_{*}(e_{j})\>\phi, \; i,j\in \Delta.\end{equation}
 Hence  $\phi$ is a horizontal immersion in $S^{9}(1)$. Moreover, it follows from \e{7.14} that the second fundamental form of $\phi$ is a scalar multiple of the original second fundamental form of $M$ restricted to the second factor of the warped product $I\times_{\rho}N$. Consequently, $\phi$ is a minimal horizontal immersion in $S^{{9}}(1)$ of a non-totally geodesic $\delta(2)$-ideal Lagrangian minimal submanifold of $CP^{4}(4)$. 

The converse is easy to verify. \end{proof}

\section{Improved $\delta(2,2)$-ideal Lagrangian submanifolds of $CH^{5}$}

\begin{thm}\label{T:8.1} Let $M$ be an improved $\delta(2,2)$-ideal Lagrangian submanifold in $CH^5(-4)$. Then $M$ is one of the following Lagrangian submanifolds:

\begin{itemize}
\item[{\rm (i)}]  a $\delta(2,2)$-ideal Lagrangian minimal  submanifold;

 \item[{\rm (ii)}] an $H$-umbilical Lagrangian submanifold of ratio 4;

 \item[{\rm (iii)}] a Lagrangian submanifold defined by 
\begin{align} \label{8.1}&\hskip.15in L(\mu,u_{1},\ldots,u_{4})= \frac{1}{c}\(\! \sqrt{\mu} e^{\ii \theta}\phi(u_{2},\ldots,u_{4}),e^{-\ii \theta} (\sqrt{\mu\hskip-.01in - \hskip-.01in \mu^{3} \hskip-.01in - \hskip-.01in c^{2}}-\ii\mu^{\frac{3}{2}})\)\! ,\end{align}
  where $c$ is a positive number,  $\phi:N^4\to H_{1}^{9}(-1)\subset {\bf C}^{5}_{1}$ 
  is  a horizontal  lift of a non-totally geodesic $\delta(2)$-ideal minimal Lagrangian immersion in $CH^4(-4)$, and $\theta(t)$ satisfies
  $\frac{d\theta}{d\mu}=\frac{1}{2}\sqrt{1-\mu^{2}-c^{2}\mu^{-1}};$
  
 \item[{\rm (iv)}]  a Lagrangian submanifold defined by 
\begin{align} \label{8.2}&\hskip.15in L(\mu,u_{1},\ldots,u_{4})= \frac{1}{c} \( e^{-\ii \theta} (\sqrt{\mu\hskip-.01in - \hskip-.01in \mu^{3} \hskip-.01in + \hskip-.01in c^{2}}-\ii\mu^{\frac{3}{2}}),  \sqrt{\mu} e^{\ii \theta}\phi(u_{2},\ldots,u_{4})\)\!,\end{align}
  where $c$ is a positive number,  $\phi:N^4\to S^{9}(1)\subset {\bf C}^{5}$ 
  is  a horizontal  lift of a non-totally geodesic $\delta(2)$-ideal minimal Lagrangian immersion in $CP^4(4)$, and $\theta(t)$ satisfies
  $\frac{d\theta}{d\mu}=\frac{1}{2}\sqrt{1-\mu^{2}+c^{2}\mu^{-1}};$
    
   \item[{\rm (v)}] a Lagrangian submanifold defined by 
   \begin{equation}\begin{aligned}\label{8.3} &\hskip.1in L(t,u_{1},\ldots,u_{4})=\frac{1}{\cosh t\hskip-.01in -\hskip-.01in \ii \sinh t}\! \(\! 2t \hskip-.01in +\hskip-.01in w\hskip-.01in+ \hskip-.01in\ii \! \(\! \cosh 2t \hskip-.01in -\hskip-.02in   \<\psi,\psi\> \hskip-.01in -\hskip-.01in \frac{1}{4}\)\! ,\right.
\\&\hskip1.4in \left. \psi ,\, 2t+w+\ii \! \( \! \cosh 2t \hskip-.01in -\hskip-.02in   \<\psi,\psi\>\hskip-.01in +\hskip-.01in \frac{1}{4}\)\!\), \end{aligned}\end{equation}
where $\psi(u_{1},\ldots,u_{4})$  is a non-totally geodesic $\delta(2)$-ideal  Lagrangian minimal immersion in ${\bf C}^{4}$ and up to a constant 
$w(u_{1},\ldots,u_{4})$ is the unique solution of the PDE system: $w_{u_{j}}\! =2\< \psi_{u_{j}},\ii  \psi\>\! ,\, j=1,2,3,4;$

    \item[{\rm (vi)}] a Lagrangian submanifold defined by 
   \begin{equation}\begin{aligned}\label{8.4} &\hskip.1in L(t,u_{1},\ldots,u_{4})=\frac{1}{\cosh t\hskip-.01in -\hskip-.01in \ii \sinh t}\! \(\! 2t \hskip-.01in +\hskip-.01in w+\ii \! \(\! \cosh 2t \hskip-.01in -\hskip-.02in   \<\psi,\psi\> \hskip-.01in -\hskip-.01in \frac{1}{4}\)\! ,\right.
\\&\hskip1.4in \left. \psi_{1},\psi_{2} ,\, 2t+\hskip-.01in w+\hskip-.01in\ii \! \( \! \cosh 2t \hskip-.01in -\hskip-.02in   \<\psi,\psi\>\hskip-.01in +\hskip-.01in \frac{1}{4}\)\!\), \end{aligned}\end{equation}
where $\psi=(\psi_{1},\psi_{2})$ is the direct product immersion of two non-totally geodesic Lagrangian  minimal immersions $\psi_{\alpha}: N_{\alpha}^{2}\to {\bf C}^{2},\, \alpha=1,2$, and up to a constant 
$w(u_{1},\ldots,u_{4})$ is the unique solution of the PDE system: $w_{u_{j}}\! =2\< \psi_{u_{j}},\ii  \psi\>\! ,\, j=1,2,3,4$.
  
   \end{itemize}
   \end{thm}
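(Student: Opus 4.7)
I would follow the template of Theorems \ref{T:6.1} and \ref{T:7.1}. Applying Lemma \ref{L:4.1} fixes an orthonormal frame $\{e_1,\ldots,e_5\}$ in which $h$ takes the form \eqref{4.1} with scalar parameters $a,b,\mu$. Four mutually exclusive branches arise: (A) $\mu=0$, giving case (i) by definition; (B) $\mu\ne 0$ and $a=b=0$, giving case (ii) (an $H$-umbilical of ratio $4$); (C) $\mu\ne 0$ with exactly one of $a,b$ zero, taken by symmetry to be $b=0,\, a\ne 0$; and (D) $a,b,\mu$ all nonzero. Branches (C) and (D) are the substantive ones and will produce cases (iii)--(iv) and (v)--(vi) respectively.

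In branch (C), Lemmas \ref{L:5.1}--\ref{L:5.4} present $M$ locally as a warped product $I\times_{\rho(t)}N^4$ with $e_5=\partial_t$, and Lemma \ref{L:5.4} specialised to $c=-1$ gives
\begin{equation*}
\frac{d\nu}{dt}=1-\nu^2-3\mu^2,\qquad \frac{d\mu}{dt}=2\mu\nu.
\end{equation*}
I then mimic the construction in Theorem \ref{T:7.1}: lift $L$ horizontally into $H_1^{11}(-1)\subset{\bf C}_1^6$, choose $\theta$ with $\theta'(t)=\mu$, and set
\[
\xi\propto e^{-3\ii\theta}\bigl(e_5-(\nu+\ii\mu)L\bigr),\qquad \phi\propto e^{-\ii\theta}\bigl(L+(\nu-\ii\mu)e_5\bigr),
\]
normalised to unit length. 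A direct computation using \eqref{5.1} together with $\tn_{e_5}e_5=4\ii\mu e_5+L$ (the sign of $L$ reflecting $\varepsilon=-1$ in \eqref{2.10}) will yield $\tn_{e_\alpha}\xi=0$ and $\tn_{e_5}\phi=0$, so $\xi$ is a constant unit vector in ${\bf C}_1^6$ and $\phi=\phi(u_1,\ldots,u_4)$ is independent of $t$. The dichotomy $\langle\xi,\xi\rangle=\pm 1$ splits the argument into the two sub-cases foreseen by Lemma \ref{L:3.5}: timelike $\xi$ forces $\phi$ into a spacelike $S^9(1)\subset{\bf C}^5$ which Hopf-projects to a $\delta(2)$-ideal minimal Lagrangian in $CP^4(4)$, yielding case (iv); spacelike $\xi$ forces $\phi$ into $H_1^9(-1)\subset{\bf C}_1^5$ projecting to $CH^4(-4)$, yielding case (iii). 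A computation of the induced second fundamental form of $\phi$, patterned on \eqref{6.9}, identifies $\phi$ as a genuine $\delta(2)$-ideal horizontal lift. Solving $\nu^2=1-\mu^2\pm c^2\mu^{-1}$ via the integrating factor $\mu$ on $y=\nu^2$, then integrating $d\theta/d\mu=1/(2\nu)$, produces the explicit formulas \eqref{8.1} and \eqref{8.2}.

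In branch (D), Lemma \ref{L:4.5} forces $\nu^2+\mu^2=1$ and $\mu'^{\,2}=4\mu^2(1-\mu^2)$, whence up to translation $\mu=\sech 2t$ and $\nu=-\tanh 2t$; Lemma \ref{L:4.4} presents $M$ as a doubly warped product $I\times_{\rho_1(t)}M_1^2\times_{\rho_2(t)}M_2^2$. The formulas of Lemma \ref{L:4.2} show that, modulo the $\mu Je_5$ terms attributable to the warping, the intrinsic second fundamental form on $M_1^2\times M_2^2$ agrees with that of a minimal Lagrangian immersion $\psi:N^4\to{\bf C}^4$. Noting that the prefactor of the special $H$-umbilical immersion \eqref{3.13} simplifies (via $\cos\tan^{-1}\tanh t=\cosh t/\sqrt{\cosh 2t}$) to $(\cosh t-\ii\sinh t)^{-1}$, the natural ansatz is \eqref{8.3},
\[
L=\tfrac{1}{\cosh t-\ii\sinh t}\Bigl(2t+w+\ii(\cosh 2t-\langle\psi,\psi\rangle-\tfrac14),\,\psi,\,2t+w+\ii(\cosh 2t-\langle\psi,\psi\rangle+\tfrac14)\Bigr).
\]
I would verify in sequence that $\langle L,L\rangle=-1$, that $L$ is horizontal in $H_1^{11}(-1)$, and that the Gauss--Weingarten data of $L$ reproduce \eqref{4.1} with the correct $a,b,\mu$. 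The horizontality conditions $\langle L_{u_j},\ii L\rangle=0$ reduce, after substituting the ansatz and using that $\psi$ is minimal Lagrangian, to the first-order linear PDE system $w_{u_j}=2\langle\psi_{u_j},\ii\psi\rangle$; its integrability conditions are precisely the closedness of the 1-form $\langle d\psi,\ii\psi\rangle$, which holds automatically because $\psi$ is Lagrangian. The split (v) vs.\ (vi) reflects the internal structure of $\psi$: when $\rho_1\ne\rho_2$, the cross-vanishing in \eqref{4.1} propagated through the doubly-warped structure forces $\psi$ to split globally as a direct product $\psi_1\times\psi_2$ of two minimal Lagrangian surfaces in ${\bf C}^2$, giving case (vi); the remaining situations give case (v).

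The main obstacle will be branch (D). Pinning down the precise ansatz \eqref{8.3}/\eqref{8.4} and verifying that its Gauss data really match \eqref{4.1} -- with the correct values of $a,b,\mu$ determined from $\psi$ -- involves sustained bookkeeping, as does rigorously tracing how the warping pair $(\rho_1,\rho_2)$ dictates whether $\psi$ must split globally as a product. The converses in (iii)--(vi), namely that each listed immersion is indeed improved $\delta(2,2)$-ideal in $CH^5(-4)$, amount in each case to a long but routine verification, organised by first checking horizontality and then computing $h$ against \eqref{4.1}.
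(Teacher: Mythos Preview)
Your case analysis has a genuine gap. You attribute cases (iii)--(iv) to branch (C) and cases (v)--(vi) to branch (D), but this is not how the classification actually falls out.

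In branch (C) the first integral of the ODE system $\nu'=1-\nu^2-3\mu^2$, $\mu'=2\mu\nu$ is $\nu^2=1-\mu^2-k\mu^{-1}$ for a real constant $k$, so there are \emph{three} sub-cases, not two: $k>0$, $k<0$, and $k=0$. Your ``dichotomy $\langle\xi,\xi\rangle=\pm 1$'' is really a trichotomy, and you have overlooked the null case $1-\mu^2-\nu^2=0$, in which the vector $e_5-(\nu+\ii\mu)L$ is lightlike and cannot be normalised at all. This null sub-case is precisely what produces case (v): one gets $\mu=\sech 2t$, $\nu=-\tanh 2t$, solves the second-order linear ODE $L_{tt}-4\ii(\sech 2t)L_t-L=0$ for the horizontal lift $L$ (this is where the prefactor $(\cosh t-\ii\sinh t)^{-1}$ genuinely enters, not via an ansatz), and then extracts a minimal Lagrangian $\psi:N^4\to{\bf C}^4$ from the condition $\langle L,L\rangle=-1$ together with horizontality. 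Because $b=0$ here, the second fundamental form restricted to $N^4$ has only one distinguished $2$-plane, so $\psi$ is $\delta(2)$-ideal rather than a product; that is case (v).

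Branch (D), with $a,b,\mu$ all nonzero, is forced by Lemma~\ref{L:4.5} into the \emph{same} relation $\mu^2+\nu^2=1$, and the analysis is then identical to the null sub-case of (C) except that now the doubly-warped structure of Lemma~\ref{L:4.4} and the form \eqref{4.1} with $b\ne 0$ make both $2$-planes special, so $\psi$ splits as a product $\psi_1\times\psi_2$ of two minimal Lagrangian surfaces in ${\bf C}^2$. That yields case (vi), and only case (vi). Your proposed criterion ``$\rho_1\ne\rho_2$'' is not what separates (v) from (vi); the distinction is simply whether $b=0$ or $b\ne 0$, i.e.\ whether you are in the null sub-branch of (C) or in (D).
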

\begin{proof}    Under the hypothesis there exists an orthonormal frame $\{e_{1},\ldots,e_{5}\}$ such that \e{4.1} holds. 
\vskip.05in

{\it Case} (1) $\mu=0$. In this case, we obtain case (i) of the theorem. 
\vskip.05in

{\it Case} (2): $\mu\ne 0$ {\it and} $a,b=0$. In this case $M$ is an $H$-umbilical Lagrangian submanifold with ratio 4, which gives case (ii).

\vskip.05in

{\it Case} (3): $\mu\ne 0$ {\it and at least one of $a,b$ is nonzero}. Without loss of generality, we may assume $a\ne 0$ and $\mu>0$. We divide this into two cases.

\vskip.05in

{\it Case} (3.a): $a,\mu\ne 0$ {\it and}  $b=0$. By Lemmas \ref{L:5.1} we obtain \e{5.1} and \e{5.2}. Also,  $M$ is locally a warped product $I\times_{\rho(t)} N^{4}$ with $e_{5}=\partial_{t}$ according to Lemma  \ref{L:5.3}. 
From Lemma  \ref{L:4.1} we find
 \begin{equation}\begin{aligned}\label{8.5} &h(e_{1},e_{1})=a Je_{1}+\mu Je_{5},\; h(e_{1},e_{2})=-a Je_{2},\;\\& h(e_{2},e_{2})=-a Je_{1}+\mu Je_{5},\\&h(e_{3},e_{3})=h(e_{4},e_{4})=\mu Je_{5},\; h(e_{5},e_{5})=4\mu Je_{5},
\\& h(e_{i},e_{5})= \mu Je_{i},\; i\in \Delta,\;
\;  h(e_{i},e_{j})=0,\; otherwise.\end{aligned}\end{equation}

Let  $L$ be a horizontal immersion of $M$ in $H^{11}_{1}(-1)\subset {\bf C}_{1}^{6}$ of the Lagrangian immersion of $M$ in $CH^5(-4)$ via  Hopf 's fibration and $\theta(t)$ a function satisfying 
\begin{align}\label{8.6} \frac{d\theta}{dt}={\mu}.\end{align}
From Lemma \ref{L:5.4} we obtain the following ODE system:
\begin{align} \label{8.7} &\frac{d\nu}{dt}=1-3\mu^{2}-\nu^{2},\;\;\;  \frac{d\mu}{dt}=2\mu\nu.\end{align}
It follows from \e{8.6} and \e{8.7} that 
\begin{align} \label{8.8} &\frac{d\nu}{d\mu}=\frac{1-3\mu^{2}-\nu^{2}}{2\mu\nu},\;\;\;  \frac{d\theta}{d\mu}=\frac{1}{2\nu}.\end{align}
Solving the first differential equation in \e{8.8} gives
$\nu=\pm \sqrt{1- \mu^{2}-k\mu^{-1}}$ for some real number $k$.
By replacing $e_{5}$ by $-e_{5}$ if necessary, we find 
\begin{align} \label{8.9} &\nu=\sqrt{1- \mu^{2}-k\mu^{-1}},\;\;\;  \frac{d\theta}{d\mu}=\frac{1}{2\sqrt{1- \mu^{2}-k\mu^{-1}}}.\end{align}

It follows from \e{8.7} that 
$\frac{d}{dt}(1-\mu^2-\nu^{2})=-2\nu(1-\mu^2-\nu^{2}).$
Since this  equation for  $y(t)=1-\mu^2-\nu^{2}=k\mu^{{-1}}$ has a unique solution for each given initial condition, each solution either vanishes identically or is nowhere zero. 

\vskip.05in

{\it Case} (3.a.1): $\mu^2+\nu^{2}<1$. In this case,  \e{8.9} implies $k>0$. Thus we may put $k=c^{2},\, c>0$.
Consider the maps:
\begin{align}\label{8.10}  \eta=\frac{e^{-3 \ii \theta}(e_{5}- (\nu+\ii \mu)L
\big)}{\sqrt{1-\mu^2-\nu^{2}}},\;\; \phi=\frac{e^{-\ii \theta}\( \(\nu-\ii\mu\)e_5-L\)}{\sqrt{1-\mu^2-\nu^2}}.\end{align}
Then $\<\eta,\eta\>=1$ and $\<\phi,\phi\>=-1$. From $\nabla_{e_{j}}e_{5}=\nu e_{j}$, $j\in \Delta,$ and \e{8.5}, we obtain $\tilde \nabla_{e_j}\xi=0$, where $\tilde \nabla$ is the Levi-Civita connection of ${\bf C}^{6}_{1}$. 
 Lemma \ref{L:5.1} and \e{8.5} give $\tilde \nabla_{e_{5}}e_{5}=4\ii \mu e_{5}+L$. Thus we find $\tilde \nabla_{e_5}\xi=0$. So $\eta$ is a constant unit vector. 
Also, we find $\tilde \nabla_{e_5}\phi=0$. Hence $\phi$ is independent of $t$.
From \e{8.10}  we get
\begin{align}\label{8.12}L=- \frac{e^{\ii \theta}\phi +  e^{-\ii \theta}(\nu-\ii\mu)\eta}{\sqrt{1-\mu^2-\nu^2}}.\end{align}
Since $\phi$ is orthogonal to $\eta,\ii\eta$ and $\eta$ is a constant unit space-like vector,  we conclude from \e{8.9} and \e{8.12} that $L$ is congruent to  \e{8.1}.
Next, by applying the same method of the proof of Theorem \ref{T:7.1}, we conclude that
 $\phi$ is a horizontal immersion in  $H_{1}^{9}(-1)$ whose second fundamental form  is a scalar multiple of the original second fundamental form restricted to the second factor of  $I\times_{\rho}N$. Consequently, $\phi$ is a minimal horizontal immersion in $H^{9}_{1}(-1)$ of a non-totally geodesic $\delta(2)$-ideal Lagrangian minimal submanifold of $CH^{4}(-4)$. This gives case (iii).

 \vskip.05in

{\it Case} (3.a.2): $\mu^2+\nu^{2}>1$. In this case \e{8.8} implies $k<0$. Thus we may put $k=-c^{2},\, c>0$. Now, we consider the maps:
\begin{align}\label{8.13}  \eta=\frac{e^{-3 \ii \theta}(e_{5}- (\nu+\ii \mu)L
\big)}{\sqrt{\mu^2+\nu^{2}-1}},\; \; \phi=\frac{e^{-\ii \theta}\( \(\nu-\ii\mu\)e_5-L\)}{\sqrt{\mu^2+\nu^2-1}}\end{align}
instead. Then $\<\phi,\phi\>=-\<\eta,\eta\>=1$.  
By applying similar arguments as case (3.a.1), we know that $\eta$ is a constant  time-like vector and $\phi$ is independent of $t$ and orthogonal to $\eta,\ii \eta$. Moreover, we may prove that $\phi$ is a minimal Legendre immersion in $S^{9}(1)$. Therefore  we have case (iv) after choosing $\eta=(1,0,\ldots,0)$.

\vskip.05in

{\it Case} (3.a.3): $\mu^2+\nu^{2}=1$. In this case system \e{8.7} gives
$\frac{d\nu}{dt}=2(\nu^{2}-1)$ and $ \mu=\pm \sqrt{1-\nu^{2}}.$
Solving these and applying a suitable translations in $t$, we find
\begin{align}\label{8.15}  &\mu=\sech 2t
,\;\; \nu=-\tanh2t.\end{align} 

It follows from $\nabla_{e_5}e_5=0$, \e{8.5} and \e{8.15} that the horizontal lift $L$ of the Lagrangian immersion of $M$ in $CH^{5}(-4)\subset {\bf C}^{6}_{1}$ satisfies
\begin{align}\label{8.16}  &L_{tt}-4\ii (\sech2t)L_t-L=0.\end{align}
Solving this second order differential equation gives
\begin{equation}\begin{aligned}\label{8.17} L=&\frac{\phi(u_1,\ldots,u_4)+B(u_1,\ldots,u_4)(2t+\ii \cosh 2t)}{\cosh t-\ii \sinh t},\end{aligned}\end{equation}
where $\phi(u_1,\ldots,u_4)$ and $B(u_1,\ldots,u_4)$ are ${\bf C}^{6}_1$-valued functions.

On the other hand, it follows from Lemma \ref{L:5.1}, \e{8.5} and \e{8.15} that 
\begin{align}\label{8.18}  &L_{tu_j}=(\ii \sech2t-\tanh2t)L_{u_j}, \;\; j\in \Delta.\end{align}
Substituting \e{8.17} into \e{8.18} shows that $B$ is a constant vector $\zeta$.  Thus
\begin{equation}\label{8.19} L(t,u_1,\ldots,u_4)=\frac{\phi(u_1,\ldots,u_4)}{\cosh t-\ii \sinh t}+\frac{(2t+\ii \cosh 2t)}{\cosh t-\ii \sinh t}\zeta,\end{equation}

Since $\<L,L\>=-1$, \e{8.19} implies 
\begin{equation}\label{8.20}-\cosh 2t=\<\phi,\phi\>+\<\phi,(4t+2\ii  \cosh 2t) \zeta\>
+(4t^2+\cosh^2(2t))\<\zeta,\zeta\>.\end{equation}
Since $\phi_t=0$, by  differentiating \e{8.20} with respect $t$ we find 
\begin{equation}\begin{aligned}\label{8.21}-\sinh 2t =2t\<\phi,\zeta\>+2 \sinh 2t\<\phi,\ii\zeta\>
+(4t+\sinh 4t)\<\zeta,\zeta\>.\end{aligned}\end{equation}
We find from \e{8.21} at $t=0$  that
$\<\phi,\zeta\>=0.$ Thus \e{8.21} gives
\begin{equation}\begin{aligned}\label{8.22}0=\sinh 2t(1+ \<\phi,\ii \zeta\>)
+(4t+\sinh 4t)\<\zeta,\zeta\>.\end{aligned}\end{equation}
Differentiating \e{8.22} gives $\<\phi,\ii \zeta\>=-\frac{1}{2}-2 \<\zeta,\zeta\>.$ Thus \e{8.19} yields
$\<\phi ,\ii \zeta\>=-\tfrac{1}{2}$ and $  \<\zeta,\zeta\>=0 $. Now, we find from \e{8.20} that 
$\<\phi,\phi\>=0$. Consequently we have
\begin{align}\label{8.24} \<\phi,\phi\>=\<\zeta,\zeta\>=\<\phi,\zeta\>=0,\;\; \<\phi,\ii \zeta\>=-\tfrac{1}{2}.\end{align}
Since $\zeta$ is a constant light-like vector,  we may put
\begin{align}\label{8.25} &\zeta=(1,0,\ldots,0,1),\;\; \phi=\(a_1+\ii b_1,\ldots,a_{6}+\ii b_{6}\).\end{align}
It follows from \e{8.24} and  \e{8.25} that $a_{6}=a_1$ and $b_{6}=b_{1}+\tfrac{1}{2}$. Therefore
\begin{align}\label{8.27}  \phi=\(a_1+\ii b_1,a_2+\ii b_2,\ldots,a_{1}+\ii (b_{1}+\tfrac{1}{2})\).\end{align}
Now, by using $\<\phi,\phi\>=0$ and \e{8.27}, we find $\psi=\(a_{2}+\ii b_{2},\ldots,a_{5}+\ii b_{5}\)$ and
$b_1=-\tfrac{1}{4}-\<\psi,\psi\>$.
Combining  these with \e{8.27}  yields
\begin{align}\label{8.29} \phi=\(\! w- \ii \<\psi,\psi\>-\frac{\ii}{4},\psi ,w- \ii \<\psi,\psi\>+\frac{\ii}{4}\) \end{align}
with $w=a_{1}$. It follows from \e{8.25} and \e{8.29} that $\<\phi_{u_j}, \zeta\>=\<\phi_{u_j},\ii \zeta\>=0$.
Thus, by applying $\<L_{u_j},\ii L\>=0,\, j\in \Delta$, we find from \e{8.19} that  $\<\phi_{u_j},\ii \phi\>=0$.
 
On the other hand,  \e{8.29} implies that
\begin{align}\label{8.30}\<\phi_{u_j},\ii \phi\>=-\tfrac{1}{2}w_{u_{j}}+\<\psi_{u_{j}}, \ii\psi\>\end{align}
with $w_{u_{j}}=\frac{\partial w}{\partial u_j}$.
Therefore  $w$ satisfies the PDE system: $w_{u_{j}}=2\< \psi_{u_{j}},\ii  \psi\>.$

Now, we derive from \e{8.19}, \e{8.25} and \e{8.27} that 
\begin{equation}\begin{aligned}\label{8.31} &L=\frac{1}{\cosh t-\ii \sinh t}\(2t+w+\ii \(\cosh 2t-  \<\psi,\psi\>-\frac{1}{4}\),\right.
\\&\hskip1.4in \left. \psi ,2t+w+\ii \(\cosh 2t-  \<\psi,\psi\>+\frac{1}{4}\)\!\). \end{aligned}\end{equation}
It follows from \e{8.31} that
\begin{equation}\begin{aligned}\label{8.32} &L_{u_{j}}=\frac{1}{\cosh t-\ii \sinh t} \Big(w_{u_{j}}-\ii  \<\psi,\psi\>_{u_{j}} \! ,
 \, \psi_{u_{j}} ,w_{u_{j}}-\ii \<\psi,\psi\>_{u_{j}}\! \Big). \end{aligned}\end{equation}
Thus we find  $\<\psi_{u_{j}},\psi_{u_{k}}\>= \cosh 2t\<L_{u_{j}},L_{u_{k}}\>$
which implies that $\psi$ is an immersion  in ${\bf C}^{4}$. Also, we find from \e{8.32} and $ \<L_{u_{j}},\ii L_{u_{k}}\>=0$ that
$\<\psi_{u_{j}},\ii \psi_{u_{k}}\>=0$. Thus $\psi$ is a Lagrangian immersion. Now, by applying an argument  similar to  the last part of the proof of \cite[Theorem 6.1]{cdv}, we conclude that 
$$\psi_{u_{j}u_{k}}=\sum_{i=1}^{4}(\Gamma^{i}_{jk}+\ii h^{i}_{jk})\phi_{u_{i}},\;\; j,k\in \Delta.$$
Therefore, according to \e{8.5}, $\psi$ is a $\delta(2)$-ideal minimal Lagrangian immersion in ${\bf C}^{4}$. Consequently, we obtain case (v) of the theorem.

\vskip.05in

{\it Case} (3.b): $a,b,\mu\ne 0$. We obtain case (vi) of the theorem by applying the same argument as case (3.a.3).
\end{proof}

{\bf Acknowledgement.} The authors  thank the referee and Dr. Luc Vrancken for pointing out an error in the original version of this paper.

\end{document}